\numberwithin{equation}{section}
\newtheorem{theorem}{Theorem}[section]
\newtheorem{corollary}[theorem]{Corollary}
\newtheorem{lemma}{Lemma}[section]
\newtheorem{remark}{Remark}[section]
\newtheorem{definition}{Definition}[section]
\begin{document}

\title[Global strong solutions to planar MHD system]
{Global well-posedness of planar MHD system without heat conductivity}

\author{Jinkai Li}
\address{Jinkai Li, South China Research Center for Applied Mathematics and Interdisciplinary Studies, School of Mathematical Sciences, South China Normal University, Zhong Shan Avenue West 55, Guangzhou 510631, P. R. China}
\email{jklimath@m.scnu.edu.cn; jklimath@gmail.com}

	\author{Mingjie Li}
\address{MingJie Li, College of Science, Minzu University of China, Beijing 100081, P. R. China}
\email{lmjmath@163.com}

\date{
June 22, 2025
}

\begin{abstract}
In this paper, we consider the Cauchy problem to the planar magnetohydrodynamics (MHD) system with both constant viscosity and constant resistivity but without heat conductivity. Global well-posedness of strong solutions in the presence of natural far field vacuum, due to the finiteness of the mass, is established for any large initial data of suitable smoothness. Density discontinuity and interior vacuum of either point-like or piecewise-like are also allowed. Technically, the entropy-type energy inequality, which although is commonly used as a basic tool in existing literature on the planar MHD system, is not workable in the current paper, as it is not consistent with the far field vacuum. Instead, besides making full use of advantages of the effective viscous flux as in \cite{LJK1DNONHEAT,LIJLIM2022,LIXINADV}, a new coupling structure, between the longitudinal velocity $u$ and the transversal magnetic field $\bm h$, is exploited to recover the dissipative estimate on $\bm h$.
 \end{abstract}

\keywords{
compressible magnetohydrodynamic equations; without heat conductivity; global well-posedness;
vacuum; effective viscous flux; transverse effective viscous flux.}

\subjclass[2020]{
76N10; 
35D35; 
76N06 
}

\maketitle

\allowdisplaybreaks
\section{Introduction}
\subsection{The planar MHD system}
\label{subsec1.1}
Denote by $(x, x_2, x_3)$ the spatial coordinate and $(x_2,x_3)$ the transverse variable.
The planar MHD system is derived by assuming that the flow moves uniformly in the transverse variable, which in the Eulerian coordinates read as
(see, e.g., \cite{WANG2003})
\begin{equation}\label{mhdfull}
\begin{cases}
\tilde{\rho}_{t}+(\tilde{\rho} \tilde{u})_x=0, \\
(\tilde{\rho} \tilde{u})_{t}+(\tilde{\rho} \tilde{u}^2+\tilde{P})_x=(\lambda \tilde{u}_x)_x-\frac{1}{4\pi} \tilde{\bm{b}}\cdot\tilde{\bm{b}}_{x}, \\
(\tilde{\rho}\tilde{\bm{w}})_t+(\tilde{\rho}\tilde{u}\tilde{\bm{w}})_x-\frac{1}{4\pi}\tilde{\bm{b}}_x=\mu\tilde{\bm{w}}_{xx}, \\
\tilde{\bm{b}}_t+(\tilde{u}\tilde{\bm{b}})_x-\tilde{\bm{w}}_x=\nu\tilde{\bm{b}}_{xx}, \\
\tilde{P}_t+\tilde{u}\tilde{P}_x+\gamma \tilde{P}\tilde{u}_{x}-(\gamma-1)(\kappa\tilde\theta_x)_x=(\gamma -1)\Big(\lambda (\tilde{u}_x)^2+\mu|\tilde{\bm{w}}_{x}|^2+\frac{\nu}{4\pi}|\tilde{\bm{b}}_x|^2\Big),
\end{cases}
\end{equation}	
where $\tilde{\rho}\geq0$ denotes the density, $\tilde{u}\in\mathbb R$ is the longitudinal velocity, $\tilde{\bm{w}}\in\mathbb R^2$ and $\tilde{\bm{b}}\in\mathbb R^2$ are the transverse velocity and transverse magnetic field, respectively, $\tilde P\geq0$ is the pressure, and $\tilde\theta\geq0$ is the absolute temperature; $\mu,\lambda$ are viscosity coefficients, $\nu$ is the magnetic resistivity coefficient, and $\kappa$ is the heat conductivity coefficient; by the ideal gas law, $\tilde{P}=R\tilde{\rho}\tilde{\theta}$ for a positive constant $R$.

If ignoring the transversal velocity $\tilde{\bm w}$, then system (\ref{mhdfull}) reduces to the following simper one:
\begin{equation}\label{mhdsimper}
\begin{cases}
\tilde{\rho}_{t}+(\tilde{\rho} \tilde{u})_x=0, \\
(\tilde{\rho} \tilde{u})_{t}+(\tilde{\rho} \tilde{u}^2+\tilde{P})_x=(\lambda \tilde{u}_x)_x-\frac{1}{4\pi} \tilde{\bm{b}}\cdot\tilde{\bm{b}}_{x}, \\
\tilde{\bm{b}}_t+(\tilde{u}\tilde{\bm{b}})_x=\nu\tilde{\bm{b}}_{xx}, \\
\tilde{P}_t+\tilde{u}\tilde{P}_x+\gamma \tilde{P}\tilde{u}_{x}-(\gamma-1)(\kappa\tilde\theta_x)_x=(\gamma -1)\Big(\lambda (\tilde{u}_x)^2+\frac{\nu}{4\pi}|\tilde{\bm{b}}_x|^2\Big).
\end{cases}
\end{equation}
Note that system (\ref{mhdsimper}) does not follow from (\ref{mhdfull}) by merely assuming that the transverse velocity is identically zero initially, as the variation of the transverse magnetic will generate the motion
in the transversal direction. If assuming further that, at the initial time, the transverse velocity vanishes and
the transverse magnetic filed is identically a constant vector, then system (\ref{mhdfull}) reduces to the one-dimensional
full compressible Navier--Stokes equations for which one may refer to, e.g.,  \cite{KAZHIKOV77,KAZHIKOV82,ZLOAMO97,ZLOAMO98,CHEHOFTRI00,JIAZLO04,LILIANG16,WENZHU13,JL1DHEAT,LJK1DNONHEAT,LIXINADV,LIXINCPAM,LIXINSIMA2024,PANZHANG2015} and the references therein for the results such as global well-posedness and large time behavior of solutions.

There is an extensive literature on the planar MHD system (\ref{mhdfull}) and its simplification (\ref{mhdsimper}). Concerning (\ref{mhdsimper}), in the case that all the dissipation coefficients
are positive constants, global well-posedness of strong solutions to the initial-boundary value problem or Cauchy problem was established by Kazhikhov--Smagulov \cite{KAZSMA1986}, Kazhikhov \cite{KAZ1987}, and
Smagulov--Durmagambetov--Iskenderova \cite{SMADURISK1993}, where \cite{KAZSMA1986} considered also the cases that $\kappa=\kappa_0\tilde\theta$ or $\kappa=\frac{\kappa_0}{\tilde\rho}$, for some positive constant $\kappa_0$. In the non-resistive case, i.e., the case that $\nu=0$, global well-posedness to system (\ref{mhdsimper}) was proved by Fan--Hu \cite{FANHU2015} and Zhang--Zhao \cite{ZHANGZHAO2017}, under the condition that the heat conductivity coefficient follows a positive power-law dependence on the temperature, that is $\kappa\sim1+\tilde\theta^q$ for $q>0$. Note that \cite{FANHU2015,ZHANGZHAO2017} do not cover the constant heat conductivity case, which was subsequently investigated by
Si--Zhao \cite{SIZHAO2019}, where large time behavior of strong solutions was obtained. The case that with all coefficients depending on temperature was considered by Su--Zhao \cite{SUZHAO2018}, where global well-posedness
was proved by assuming that the adiabatic index $\gamma$ is close to one.

Concerning system (\ref{mhdfull}), among some suitable conditions on the constitutive equations and the dissipation coefficients,
Chen--Wang \cite{CHENWANG2002,CHENWANG2003} and Wang \cite{WANG2003} established the global well-posedness of strong solutions, where the heat conductivity coefficient is assumed to follow a power-law dependence on the
temperature with exponent greater than or equal to two, that is $\kappa\sim1+\tilde\theta^q$ for some $q\geq2$. Such a requirement in \cite{CHENWANG2002,CHENWANG2003,WANG2003} was subsequently relaxed to $q>0$ by Hu--Ju
\cite{HUJU2015} and Fan--Huang--Li \cite{FANHUANGLI2017} in the case that the viscosity coefficient and magnetic resistivity coefficient are both positive constants, by Huang--Shi--Sun \cite{HUANGSHISUN2019} and
Cao--Peng--Sun \cite{CAOPENGSUN2021} that with density dependent viscosity and positive constant resistivity, and by Li \cite{LIY2018} that with zero magnetic resistivity.
Vanishing shear viscosity limit was obtained by Fan--Jiang--Nakamura \cite{FANJIANGNAK2007} with initial in Lebesgue space under the assumption that the exponent $q\geq1$. Technically, the power-law type dependence is
essentially used to derive the upper bounds of either the specific volume or the temperature. The case that with all coefficients being positive constant was considered by Lv--Shi--Xiong \cite{LVSHIXIONG2021}. Large time
behavior towards constant states of strong solutions to system (\ref{mhdfull}) was recently established
in \cite{HUANGSHISUN2021,SUNZHANG2024,SHANGYANG2024,SONGZHAO2025,ZHANG2025}, which demonstrate the exponential convergence for the initial-boundary value problem on bounded intervals and convergence but without rate for the Cauchy problem. Remarkably, all the papers mentioned in this paragraph make use of an entropy-type energy estimate which was introduced in \cite{SMADURISK1993} as a basic tool.

Due to the fundamental physical law of conservation of mass, if the fluid is of finite mass initially on unbounded domains, then it must contain far field vacuum at any time. Therefore, it is natural to
consider the MHD system in the presence of far field vacuum. However, this is not addressed in the papers mentioned in the above. In fact, all these papers, except
\cite{FANHUANGLI2017,LIY2018,FANHU2015}, assume that the the initial density has positive lower bound, while \cite{FANHUANGLI2017,LIY2018,FANHU2015} can only deal with the point-like interior vacuum but not
the far field vacuum. The main technical reason is that all these papers make use an entropy-type energy estimate, which unfortunately is not consistent with the far field vacuum when assuming the ideal gas law.

The aim of this paper is to address the vacuum problem, in particular the far field vacuum, to the planar MHD system (\ref{mhdfull}). Precisely, we prove the global well-posedness of strong solutions to the Cauchy problem of system (\ref{mhdfull}), in the presence of all kinds of vacuum including the point-like vacuum, piecewise vacuum, and the far field vacuum. As explained in the above, since the entropy-type energy estimate is not consistent with the far field vacuum and piecewise vacuum, different from all previous works, we do not use this type estimate as a tool to establish our theory.

In this paper, we continue to consider the case that with zero heat conductivity, which was initiated in our previous work \cite{LIJLIM2022} for the planar MHD (see Li \cite{LJK1DNONHEAT} and Li--Xin \cite{LIXINADV} in the context of the compressible Navier--Stokes equations), where the case that with positive
constant viscosity but with both zero heat conductivity and zero magnetic resistivity was considered. In the current work, we focus on the case that with positive magnetic resistivity.


\subsection{Reformulation in Lagrangian coordinates and main result}
\label{subsec1.2}

In this subsection, we follow the idea in Li--Xin \cite{LIXINADV,LIXINCPAM} to reformulate system (\ref{mhdfull}) in the
Lagrangian coordinates. Denote by $y$ the lagrangian coordinate and set
$$
x=\eta(y,t),
$$
where $\eta(y, t)$ is the flow map determined by $\tilde{u}$, that is,
\begin{equation*}\label{map}
\begin{cases}
\partial_{t}\eta(y,t)=\tilde{u}(\eta(y,t),t), \\
\eta(y,0)=y.
\end{cases}
\end{equation*}

Define the new unknowns in the Lagrangian coordinate as
$$
(\rho,u,\bm{w},\bm{h},P)(y,t)=(\tilde{\rho},\tilde{u},\tilde{\bm{w}},\tilde{\bm{b}},\tilde P)(\eta(y,t),t).
$$
Recalling the definition of $\eta$ and by straightforward calculations, one can check that
$$
\tilde{u}_x=\frac{u_y}{\eta_y}, \quad \tilde{u}_{xx}=\frac{1}{\eta_y}\left(\frac{u_y}{\eta_y}\right)_y,\quad
\tilde{u}_t+\tilde{u}\tilde{u}_x=u_t.
$$
The same relations hold for $\tilde\rho, \tilde{\bm{\omega}}, \tilde{\bm{b}},$ and $\tilde P$. Using these relations,
one can easily derive the corresponding system in the Lagrangian coordinate. However, in order to deal with the vacuum more efficiently, we introduce a new function which is the Jacobian between the Euler coordinate and the Lagrangian
coordinate:
$$J(y,t):=\eta_y(y,t).$$
By the definition of the flow map $\eta$, one can easily check that
$$J_t=u_y.$$
Due to $(\ref{mhdfull})_1$ and $(\ref{mhdfull})_2$, it holds that
$(J\rho)_t=0,$ from which, setting $\rho|_{t=0}=\rho_0$ and since $J|_{t=0}=1$, one has
\begin{equation*}
J\rho=\rho_0.
\end{equation*}

Using the calculations in the previous paragraph, one can rewrite system $(\ref{mhdfull})$ in the Lagrangian coordinate as
\begin{equation}\label{mhd}
\left\{
\begin{array}{rrcl}
&\displaystyle J_t & = & u_y,\vspace{6pt}\\
&\displaystyle \rho_0  u_t+P_y+\frac{1}{4\pi}\bm{h}\cdot\bm{h}_y& = &\displaystyle\lambda \left(\frac{u_y}{J}\right)_y,\vspace{6pt}\\
&\displaystyle\rho_0\bm{w}_t-\displaystyle\frac{1}{4\pi}\bm{h}_y&=&\mu\left(\displaystyle\frac{\bm{w}_y}{J}\right)_{y}, \vspace{6pt}\\
&\displaystyle\bm{h}_t+\frac{u_y}{J}\bm{h}-\frac{\bm{w}_y}{J}&=&\displaystyle\frac{\nu}{J}\left(\frac{\bm{h}_y}{J}\right)_{y},\vspace{6pt}\\
& \displaystyle P_t+\gamma\frac{u_y}{J}P&=&\displaystyle(\gamma-1)\left(\lambda \left|\frac{u_y}{J}\right|^2+\mu\left|\frac{\bm{w}_y}{J}\right|^2+\frac{\nu}{4\pi}\left|\frac{\bm{h}_y}{J}\right|^{2}\right).
\end{array}
\right.
\end{equation}

In the current paper, we consider the Cauchy problem and, thus, complement system $(\ref{mhd})$ with the following initial condition
\begin{equation}\label{ini}
\big(J,\sqrt{\rho_0}{u},\sqrt{\rho_0}{\bm{w}},{\bm{h}},{P}\big)|_{t=0}=\big(J_0,\sqrt{\rho_0}{u}_0,\sqrt{\rho_0}{\bm{w}}_0,
{\bm{h}}_0,{P}_0\big),
\end{equation}
where $J_0$ has uniform positive lower and upper bounds.
Note that by definition $J_0$ should be
identically one; however, in order to extend the local solution to be a global one, one may take some positive time $T_*$
as the initial time at which $J$ is not necessary to be identically one and, as a result, we have to deal with the
local well-posedness result with initial $J_0$ not being identically one. One may also note that the initial conditions in (\ref{ini}) are imposed on $(\sqrt{\rho_0}u,\sqrt{\rho_0}\bm{w})$ rather on $(u,\bm{w})$, in other words,
one only needs to specify the values of $(u,\bm{w})$ in the non-vacuum region $\{y\in\mathbb R|\rho_0(y)>0\}$.

The following conventions will be used throughout this paper. For $1\leq p\leq\infty$ and positive integer $k$, $L^p$ and $H^k$ are the standard Lebesgue and Sobolev spaces, respectively, $\|u\|_p$ is the $L^p$ norm of $u$, and $\|(f_1, f_2, \cdots, f_n)\|_X$ is the sum $\sum_{i=1}^N\|f_i\|_X$.

Strong solutions to the Cauchy problem of system (\ref{mhd}) are defined as follows.

\begin{definition}
\label{DefLocal}
Given a positive time $T$. $\big(J,{u},{\bm{w}},{\bm{h}},{P}\big)$ is called a strong solution to system $(\ref{mhd})$, subject to $(\ref{ini})$, on $\mathbb{R} \times (0, T)$, if it has the properties
\begin{equation}\nonumber
\begin{aligned}
&\inf_{y\in\mathbb{R}, t\in (0, T )}J(y,t)>0,\\
&J-J_0\in C([0,T];H^1),\quad J_t \in L^\infty (0,T;L^{2})\cap L^2(0,T; H^1),\\
&(\sqrt{\rho_0}u,\sqrt{\rho_0}\bm{w})\in C([0,T];L^{2}),\quad (u_y,\bm{w}_y,\bm{h}_y)\in L^{\infty}(0,T;L^{2})\cap L^2(0,T; H^1),\\
&(\sqrt{\rho_0}u_{t}, \sqrt{\rho_0}\bm{w}_t)\in L^{2}(0,T;L^{2}),\quad(\sqrt t u_{yt}, \sqrt t\bm{w}_{yt})\in L^2(0,T; L^2), \\
&\bm{h}\in C([0,T]; H^1)\cap L^2(0,T; H^1),\quad  \bm{h}_t\in L^2(0,T; L^2),\\
&P\in C([0,T];H^{1}),\quad
P_{t}\in L^{4}(0,T;L^{2})\cap L^{\frac43}(0,T; H^1),
\end{aligned}
\end{equation}
satisfies equations $(\ref{mhd})$, a.e.\,in $\mathbb{R} \times (0, T)$, and fulfills the initial condition $(\ref{ini})$.	
\end{definition}

\begin{definition}
$\big(J,{u},{\bm{w}},{\bm{h}},{P}\big)$ is called a global strong solution to system $(\ref{mhd})$, subject to $(\ref{ini})$, if it is a strong solution on $\mathbb{R} \times (0, T)$ for any positive time $T$. 	
\end{definition}

The main result in this paper reads as follows.

\begin{theorem}\label{thm}
Assume that $\rho_0\in L^1(\mathbb R)$ and $0\leq\rho_0\leq\bar\rho$ for some positive number $\bar\rho$, and that the initial data
$(J_0, u_0, \bm{w}_0, P_0)$ satisfies
\begin{eqnarray*}
&&J_0\equiv1, \quad (\sqrt{\rho_0}u_0, \sqrt{\rho_0}\bm{w}_0, u_0', \bm{w}_0')\in L^2(\mathbb R), \\
&&\bm{h}_0\in H^1(\mathbb R), \quad 0\leq P_0\in L^1(\mathbb R), \quad P_0'\in L^2(\mathbb R).
\end{eqnarray*}
Then, there is a unique global strong solution to $(\ref{mhd})$ subject to (\ref{ini}).	
 \end{theorem}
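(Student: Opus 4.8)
The plan is to establish local well-posedness first, then propagate it to a global solution by closing a priori estimates that stay finite on every compact interval $[0,T]$, and finally to prove uniqueness by an energy argument on the difference of two solutions. For the local theory I would regularize the degenerate density (replace $\rho_0$ by $\rho_0+\delta$, truncate the far field to a large bounded interval, and solve the resulting uniformly parabolic system by linearization and a contraction/continuity argument), and then pass $\delta\to0$ and let the interval exhaust $\mathbb R$ using the $\sqrt{\rho_0}$-weighted formulation of Definition \ref{DefLocal}; it is precisely this weighting that makes interior and far-field vacuum admissible. The heart of the matter is the global a priori estimates, for which the guiding principle is to avoid the entropy-type inequality entirely—it is incompatible with far-field vacuum under the ideal gas law—and instead to exploit the two effective viscous fluxes together with the $u$--$\bm h$ coupling.

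The first layer is the conserved total energy. Testing the $u$-, $\bm w$- and $\bm h$-equations against $u$, $\bm w$, $J\bm h$ respectively and combining with the pressure equation multiplied by $J/(\gamma-1)$, all cross terms and the dissipation--internal-energy exchange cancel, yielding conservation of
\[
E(t)=\int_{\mathbb R}\Big(\tfrac12\rho_0|u|^2+\tfrac12\rho_0|\bm w|^2+\tfrac{J}{8\pi}|\bm h|^2+\tfrac{JP}{\gamma-1}\Big)\,dy .
\]
This controls $\sqrt{\rho_0}u,\sqrt{\rho_0}\bm w$ in $L^\infty_tL^2$ and $J|\bm h|^2,JP$ in $L^\infty_tL^1$, but by itself gives no dissipation. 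To recover pointwise control of $J$ I would introduce the longitudinal effective viscous flux $G=\lambda\frac{u_y}{J}-P-\frac{1}{8\pi}|\bm h|^2$, which by the momentum equation satisfies $G_y=\rho_0u_t$, so that $G=\partial_t\Phi$ with $\Phi(y,t)=\int_{-\infty}^y\rho_0u\,dz$. Since $|\Phi|\le(\int\rho_0)^{1/2}(\int\rho_0u^2)^{1/2}$ is uniformly bounded by the mass and the energy, integrating the identity $\lambda(\log J)_t=G+P+\frac{1}{8\pi}|\bm h|^2$ in time produces a representation of $\log J$ that is uniformly bounded below, hence $J\ge c>0$, and bounded above on $[0,T]$ once the time integral of $P+|\bm h|^2$ is controlled.

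The decisive new ingredient is the coupling that recovers the dissipative estimate on $\bm h$. Solving the momentum equation for $\frac{u_y}{J}=\frac1\lambda\big(G+P+\frac{1}{8\pi}|\bm h|^2\big)$ and substituting into the induction equation converts the transport term $\frac{u_y}{J}\bm h$ into, among others, the contribution $\frac{1}{8\pi\lambda}|\bm h|^2\bm h$; testing the induction equation against $J\bm h$ then produces the sign-definite quartic term $c\int J|\bm h|^4$ together with the non-negative term $\frac{1}{2\lambda}\int JP|\bm h|^2$, which, combined with the resistive dissipation $\nu\int\frac{|\bm h_y|^2}{J}$, closes the $\bm h$-estimate despite the absence of any entropy bound. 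In parallel I would use the transverse effective viscous flux $\bm G=\mu\frac{\bm w_y}{J}+\frac1{4\pi}\bm h$, satisfying $\bm G_y=\rho_0\bm w_t$, to treat the $\bm w$-equation symmetrically and to absorb the coupling term $\int\bm w_y\cdot\bm h$. With $J$ two-sided bounded and $\bm h$ controlled, the remaining bounds follow the now-standard route for this framework: testing the momentum equations against $u_t,\bm w_t$ (and differentiating in $t$ with the weight $\sqrt t$ to gain $(u_{yt},\bm w_{yt})$ at positive times) to control $(u_y,\bm w_y,\bm h_y)$ in $L^\infty_tL^2\cap L^2_tH^1$, and propagating the $H^1$ bounds on $P$ and $\bm h$ through their own evolution equations, recovering every norm in Definition \ref{DefLocal}.

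I expect the principal obstacle to be exactly the place where the entropy estimate is usually invoked: obtaining the dissipation for $\bm h$ and the two-sided bound on $J$ at the same time. The $\bm h$-estimate supplies the time integral of $|\bm h|^2$ needed for the upper bound of $J$, while that upper bound is in turn used to make the coercive quartic term effective—so one must close this loop by tracking constants carefully, ensuring the quartic and pressure terms dominate the indefinite $\frac1\lambda G|\bm h|^2$ and the pressure-work contributions, and that every bound depends on $T$ only through a finite growing factor rather than blowing up in finite time. Once these estimates are in place, the local solution continues to all times, and a standard weighted energy estimate for the difference of two solutions yields uniqueness, completing the proof.
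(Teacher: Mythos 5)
Your overall architecture (energy identity, lower bound on $J$ via $G=\partial_t\int_{-\infty}^y\rho_0u\,dz$, the two effective viscous fluxes, upper bound on $J$ last, continuation argument) matches the paper, but your mechanism for the single most important step---the dissipative estimate on $\bm h$---has a genuine gap. After substituting $\frac{u_y}{J}=\frac1\lambda\left(G+P+\frac{1}{8\pi}|\bm h|^2\right)$ and testing the induction equation with $J\bm h$, the indefinite term $\frac1\lambda\int G\,J|\bm h|^2\,\mathrm{d}y$ cannot be dominated by the quartic term $\frac{1}{8\pi\lambda}\int J|\bm h|^4\,\mathrm{d}y$ plus the pressure term, as you propose: any Young-type splitting $|G||\bm h|^2\leq\delta|\bm h|^4+C_\delta G^2$ leaves the quantity $\int_0^t\|\sqrt J G\|_2^2\,\mathrm{d}s$, which is not available at this stage. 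In the paper that bound is the content of Lemma \ref{lemg}, whose proof requires precisely the $L^2(0,T;L^2)$ bound on $\bm h_y/\sqrt J$ you are trying to establish (and also Lemmas \ref{lemtr} and \ref{lemb4}, which depend on it in turn), so your loop is circular, not merely a matter of ``tracking constants carefully.'' Note also that the loop you do identify (quartic term versus upper bound on $J$) is not the real one: the coercive term carries its own weight $J$, so no upper bound on $J$ is needed there. Nor can you rescue the argument by running the $G$-energy estimate simultaneously: the right-hand side of the $G$-equation contains $\int u_y|\bm h|^2G\,\mathrm{d}y$, $\int\frac{|\bm h_y|^2}{J}G\,\mathrm{d}y$ and $\int\bm h\cdot\left(\frac{\bm h_y}{J}\right)_yG\,\mathrm{d}y$, which after interpolation produce terms like $\left\|\frac{\bm h_y}{\sqrt J}\right\|_2^3$ that the available dissipations cannot absorb by Gr\"onwall without the prior time-integrability of $\left\|\frac{\bm h_y}{\sqrt J}\right\|_2^2$.

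The idea missing from your proposal---and the paper's key device (Lemma \ref{lemh})---is to never estimate the $G$-contribution at all, but to absorb it into an integrating factor. Since $G=\partial_t\Phi$ with $\Phi(y,t)=\int_{-\infty}^y\rho_0u\,\mathrm{d}z$ and $|\Phi|\leq\sqrt{2\|\rho_0\|_1E_0}$ uniformly (mass times energy), the identity for $J|\bm h|^2$ is a linear ODE in time whose explicit solution involves the factor $e^{\left(\Phi(y,t)-\Phi(y,s)\right)/\lambda}$, bounded above and below by constants depending only on $\|\rho_0\|_1E_0$; after integrating over $\mathbb R$, only the resistive term (which yields the dissipation $\left\|\frac{\bm h_y}{\sqrt J}\right\|_2^2$ after integration by parts), the $\bm w_y\cdot\bm h$ term, and the sign-favorable pressure/quartic terms remain. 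The same trick is repeated at the $L^4$ level in Lemma \ref{lemb4}. You in fact invoke exactly this ``$G$ is the time derivative of a uniformly bounded function'' observation for the lower bound on $J$, but not where it is decisive. One further caution: without heat conductivity the pressure equation has no dissipation, so the bound on $\|P\|_\infty$ does not follow by simply ``propagating $H^1$ bounds''; the paper needs a genuine bootstrap (the function $\varphi$ in Lemma \ref{lemg}) intertwining $P$, $\sqrt JG$, and $\bm h_y/\sqrt J$ before the upper bound on $J$ can be extracted.
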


In order to prove the global existence, one has to carry out suitable a priori estimates
which are bounded up to any finite time. As already shown in \cite{LIJLIM2022}, the effective viscous flux $G$ and the transverse effect viscous flux $F$ play central roles in the proof of a priori estimates, where
$$
\bm{F}:=\mu\frac{\bm{w}_y}{J}+\frac{\bm{h}}{4\pi},\quad G:=\lambda \frac{u_y}{J}-P-\frac{\bm{|h|^2}}{8\pi},
$$
which respectively satisfy
\begin{equation*}
\bm{F}_t-\frac{\mu}{J}\left(\frac{\bm{F}_y}{\rho_0}\right)_y=-\frac{u_y}{J}\bm{F}+\frac{1}{4\pi }\frac{\bm{w}_y}{J}
+\frac{\nu}{4\pi J}\left(\frac{\bm{h}_y}{J}\right)_y	
\end{equation*}
and
$$
\begin{aligned}
G_t-\frac{\lambda}{J}\left(\frac{G_y}{\rho_0}\right)_y=&-\gamma\frac{u_y}{J}G+
\frac{2-\gamma}{8\pi}\frac{u_y}{J}|\bm{h}|^2-(\gamma-1)\mu\Big|\frac{\bm{w}_y}{J}\Big|^2-\frac{\bm{h}\cdot\bm{w_y}}{4\pi J}\\
&-\frac{\nu}{4\pi}\left((\gamma-1)\left|\frac{\bm{h}_y}{J}\right|^2+\frac{\bm{h}}{J}\cdot\left(\frac{\bm{h}_y}{J}\right)_{y}\right).
\end{aligned}	
$$
One of the key steps is to get the $L^\infty(0,T; L^2)$ estimate on $\sqrt JF$ and $\sqrt JG$. Comparing with our previous work \cite{LIJLIM2022}, where the
non-resistive case was considered, in the current paper, the extra terms in the equations for $F$ and $G$ are those involving the magnetic resistivity. In order to get the desired estimate on $F$, by using the $F$ equation, comparing with \cite{LIJLIM2022}, the extra step is to get the $L^2$ estimate on $\frac{\bm{h}_y}{\sqrt J}$, which is expected to be derived from equation $(\ref{mhd})_4$ by testing it with $\bm{h}$. This is indeed the case when the initial density has uniformly positive lower bound (in this case, the basic energy estimate and positive lower bound of $J$ are sufficient to deal with the term $\frac{u_y}{J}\bm{h}$), but unfortunately not if the initial vacuum is involved. This difficulty, in case the initial vacuum is involved, comes from the fact that it is $u$ itself, rather than $\sqrt{\rho_0}u$, serves as a coefficient in $(\ref{mhd})_4$, but the basic energy identity provides only the $L^2$ estimate on $\sqrt{\rho_0}u$ instead of that on $u$.
In order to overcome this new difficulty cased by the term $\frac{u_y}{J}$ in $(\ref{mhd})_4$, we exploit from $(\ref{mhd})_2$ that
$$
\frac{u_y}{J}=\frac{1}{\lambda}\left(\partial_t\int_{-\infty}^y\rho_0 udx +P+\frac{|\bm{h}|^2}{8\pi}\right).
$$
Plugging this into $(\ref{mhd})_4$, one ends up with
$$
(J|\bm{h}|^2)_t+\frac{1}{\lambda}\left(\partial_t\int_{-\infty}^y\rho_0 udx +P+\frac{|\bm{h}|^2}{8\pi}\right)J|\bm{h}|^2={2\nu\bm{h}\cdot}\left(\frac{\bm{h}_y}{J}\right)_y+2{\bm{w}_y}\cdot\bm{h},
$$
from which, considering it as an ODE for $J|\bm h|^2$ to solve $J|\bm h|^2$ and integrating over $\mathbb R$, one obtains the desired
$L^2(0,T; L^2)$ estimate on $\frac{\bm h_y}{\sqrt J}$, see Lemma \ref{lemh} in the below. Once the $L^2(0,T; L^2)$ estimate on $\frac{\bm h_y}{\sqrt J}$ is available, one can apply the same argument as in \cite{LIJLIM2022} to get the desired estimate on $F$. With the estimate on $F$ in hand and by
applying again the same idea as above to get $L^\infty(0,T; L^4)$ estimate on $J^\frac14\bm h$ and $L^2(0,T; L^2)$ estimate on $\frac{|\bm h|\bm h_y}{\sqrt J}$, one can get the desired estimate on $G$ by combining it with some other higher order estimates, see Lemma \ref{lemg}, in the below.

\begin{remark}
(i) Noticing that we only need the initial density to be nonnegative and uniformly bounded, the initial density could be very general and, in particular, it allows to have a compact support or a single point vacuum or have discontinuities.

(ii) Since $J$ has positive lower and upper bounds and $\rho=\frac{\rho_0}{J}$, one can see that
the vacuum of system (\ref{mhd}) can neither disappear nor formulate in the later time, and the discontinuities of the density are propagated along the characteristic lines.

(iii) Note that the estimates in this paper is not uniformly in time and thus the large time behavior of solutions does not follow, which may leave as future works.
\end{remark}

The rest of this paper is arranged as follows: in the next section, which is the main part of this paper, we consider the local existence and carry out the a priori estimates, while the proof of the main result is given in the last section.

\section{Local well-posedness and a priori estimates}
\label{sec2}

We start with the following local well-posedness result, which can be proved in the same way as in \cite{LJK1DNONHEAT,LIJLIM2022,LIXINADV}.

\begin{lemma}\label{lem1}
Assume that $0\leq\rho_0\leq\bar\rho$ and $\underline j\leq J_0\leq\bar j$, for some positive constants
$\bar\rho, \underline j, \bar j$, and further that
$$
(J_0',\sqrt{\rho_0}u_0, \sqrt{\rho_0}\bm{w}_0, u_0', \bm{w}_0')\in L^2, \quad(\bm{h}_0,P_0)\in H^1.
$$
Then, there is a unique strong solution $(J,u,\bm{w},\bm{h}, P)$ to $(\ref{mhd})$, subject to $(\ref{ini})$, on $\mathbb R\times(0,T_0)$, for some positive time $T_0$ depending only on $\mu, \lambda, \bar\rho, \underline j, \bar j$, and $N_0$, where
$$
N_0 :=\|(J_0',\sqrt{\rho_0}u_0, \sqrt{\rho_0}\bm{w}_0, u_0', \bm{w}_0')\|_2+\|(\bm{h_0},P_0)\|_{H^1}.
$$
\end{lemma}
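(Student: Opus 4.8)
This is a local-in-time existence, uniqueness, and regularity result for the Lagrangian system (2.4) with possibly degenerate initial density. Let me sketch how I would prove it.

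The plan is to construct the solution by a linearization-and-contraction scheme, using a density regularization to remove the vacuum, and then to pass to the limit by means of a priori estimates that are uniform in the regularization parameter and controlled on a short time interval in terms of $N_0$ alone. First I would fix $\delta\in(0,1]$ and replace $\rho_0$ by $\rho_0^\delta:=\rho_0+\delta$, which has the uniform lower bound $\delta$; the corresponding system is genuinely parabolic in $(u,\bm{w})$, so the standard energy method applies to each linear solve. For the iteration I would, given an approximation $(\bar u,\bar{\bm{w}},\bar{\bm{h}},\bar P)$, (i) recover $J$ from $(\ref{mhd})_1$ by $J=J_0+\int_0^t\bar u_y\,ds$, so that for $t$ small $J$ remains trapped in $[\underline j/2,2\bar j]$ and $1/J$ is bounded; (ii) solve $(\ref{mhd})_5$, which at each fixed $y$ is a linear first-order ODE in $t$ with integrating factor $(J_0/J)^\gamma$, giving $P=P_0(J_0/J)^\gamma+\int_0^t(\cdots)\,ds\ge0$ whenever $P_0\ge0$; and (iii) solve the three linear parabolic equations $(\ref{mhd})_2$--$(\ref{mhd})_4$ for $(u,\bm{w},\bm{h})$ with frozen coefficients and source terms assembled from the previous iterate. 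Standard linear parabolic theory supplies the regularity demanded in Definition \ref{DefLocal} for each solve, and I would close a contraction estimate in a weak difference norm such as $\sup_t\|\sqrt{\rho_0^\delta}(u,\bm{w})\|_2+\|(u_y,\bm{w}_y,\bm{h})\|_{L^2_tH^1}$, obtaining a unique fixed point $(u^\delta,\bm{w}^\delta,\bm{h}^\delta,J^\delta,P^\delta)$ on a $\delta$-dependent short interval.

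The crux, and the main obstacle, is to upgrade these estimates to bounds that are independent of $\delta$ and depend only on $\mu,\lambda,\bar\rho,\underline j,\bar j,N_0$, so that the existence time $T_0$ is uniform and the vacuum limit $\delta\to0$ can be taken. The basic energy identity---from testing $(\ref{mhd})_2$ with $u$, $(\ref{mhd})_3$ with $\bm{w}$, $(\ref{mhd})_4$ with $\bm{h}$, and combining with $(\ref{mhd})_5$---controls $(\sqrt{\rho_0^\delta}u,\sqrt{\rho_0^\delta}\bm{w},\bm{h})$ in $L^\infty_tL^2$ and $(u_y,\bm{w}_y,\bm{h}_y)$ in $L^2_tL^2$ uniformly in $\delta$, since $\rho_0^\delta\le\bar\rho+1$ and $J$ is pinched. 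The difficulty is that as $\delta\to0$ the density degenerates, so $u,\bm{w}$ are not themselves controlled in $L^2$; this is precisely where the effective viscous fluxes $G=\lambda u_y/J-P-|\bm{h}|^2/8\pi$ and $\bm{F}=\mu\bm{w}_y/J+\bm{h}/4\pi$ intervene. Testing the $G$-equation with $G$ produces the dissipation $\int G_y^2/\rho_0^\delta\ge(\bar\rho+1)^{-1}\int G_y^2$, which yields $L^2_tL^2$ control of $G_y$, and hence of $u_y/J$ through the algebraic relation $u_y/J=\lambda^{-1}(G+P+|\bm{h}|^2/8\pi)$, with no appeal to any lower bound on the density; the analogous test of the $\bm{F}$-equation controls $\bm{w}_y$. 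The troublesome coupling term $\int(u_y/J)|\bm{h}|^2$ arising from the $\bm{h}$-estimate I would absorb using this representation of $u_y/J$ together with the one-dimensional interpolation $\|\bm{h}\|_4^2\le C\|\bm{h}\|_2\|\bm{h}_y\|_2$; since $T_0$ may depend on $N_0$ I do not need the sharp time-uniform coupling of Lemmas \ref{lemh}--\ref{lemg}, only that a Gr\"onwall inequality for $\sup_t\|(\sqrt J G,\sqrt J\bm{F},\bm{h})\|_2^2+\int_0^t\|(G_y,\bm{F}_y,\bm{h}_y)\|_2^2$ closes on a short interval.

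With the $\delta$-uniform bounds in hand I would pass to the limit $\delta\to0$: weak-$*$ limits in the energy spaces, combined with an Aubin--Lions compactness argument based on the $L^2_tH^1$ bounds together with the $L^2_tL^2$ bounds on $\sqrt{\rho_0}u_t$, $\bm{h}_t$, and $P_t$, give strong convergence sufficient to pass to the limit in every nonlinear product, producing a strong solution of the original degenerate system with the regularity of Definition \ref{DefLocal} on $(0,T_0)$. Finally, for uniqueness I would take two solutions issuing from the same data, subtract the equations, and estimate the differences in low-regularity norms---$L^2$ for $J_1-J_2$ and $P_1-P_2$, the $\sqrt{\rho_0}$-weighted $L^2$ for the velocity differences, and $L^2$ for $\bm{h}_1-\bm{h}_2$---and close a Gr\"onwall inequality, again using the effective-flux structure to handle the velocity differences in the presence of vacuum. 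The most delicate point throughout is ensuring every estimate is robust as $\rho_0\to0$, i.e.\ that no bound secretly depends on a positive lower bound of the density; the effective viscous fluxes $G$ and $\bm{F}$ and the flux representation of $u_y/J$ are exactly the devices that guarantee this.
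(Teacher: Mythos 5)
The paper offers no internal proof of Lemma \ref{lem1}; it is stated with the remark that it ``can be proved in the same way as in \cite{LJK1DNONHEAT,LIJLIM2022,LIXINADV}''. Your outline --- linearize and iterate, remove the vacuum by approximation, derive approximation-independent bounds that rely on the effective viscous fluxes $G$ and $\bm{F}$ (whose dissipation $\|G_y/\sqrt{\rho_0}\|_2^2$, $\|\bm{F}_y/\sqrt{\rho_0}\|_2^2$ only improves as $\rho_0\to 0$) rather than on any positive lower bound of the density, then conclude by compactness and by uniqueness in $\sqrt{\rho_0}$-weighted norms --- is essentially the strategy of those references, and it is consistent with how the present paper itself handles vacuum in its global estimates. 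Note also that for the \emph{local} result you are right that the delicate ODE-in-$J|\bm{h}|^2$ device of Lemma \ref{lemh} is not needed; the crude algebraic substitution $u_y/J=\lambda^{-1}(G+P+|\bm{h}|^2/8\pi)$ suffices to close a short-time Gr\"onwall argument.

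There is, however, one step that fails as literally written: the regularization $\rho_0\mapsto\rho_0^\delta=\rho_0+\delta$ with the initial data kept fixed. The hypotheses give only $(\sqrt{\rho_0}u_0,u_0')\in L^2$, which does \emph{not} imply $u_0\in L^2$; for instance $\rho_0$ compactly supported and $u_0\equiv 1$ is admissible (recall $(u_0,\bm{w}_0)$ are only prescribed on $\{\rho_0>0\}$ and are otherwise arbitrary extensions with square-integrable derivative). For such data $\int(\rho_0+\delta)u_0^2\,\mathrm{d}y=\infty$, so the regularized problem's data lie outside the class your positive-density solver requires, and the ``$\delta$-uniform'' basic energy is not even finite at $t=0$. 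The repair is routine but must be stated: since $u_0'\in L^2$ forces $|u_0(y)|\leq|u_0(0)|+\|u_0'\|_2|y|^{1/2}$, one can truncate $(u_0,\bm{w}_0)$ at a scale $R_\delta$ with $\delta R_\delta^2\to 0$ (e.g.\ $R_\delta=\delta^{-1/4}$), which keeps $\|(u_0^\delta)'\|_2$ and $\|\sqrt{\rho_0^\delta}u_0^\delta\|_2$ bounded uniformly in $\delta$ while $N_0^\delta\to N_0$; alternatively, solve the degenerate linear parabolic problems directly (Galerkin/duality) without regularizing $\rho_0$ at all. With that modification, and with the $\delta$-uniform higher-order estimates (on $u_{yy}$, $\sqrt{t}\,u_{yt}$, etc.) spelled out so that the limit actually lands in the regularity class of Definition \ref{DefLocal}, your argument is a credible reconstruction of the intended proof.
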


Due to Lemma \ref{lem1}, for any initial data satisfying the conditions in Theorem \ref{thm}, there is a unique local strong solution $(J, u, \bm{w}, \bm{h}, P)$ to system (\ref{mhd}) subject to (\ref{ini}). By iteratively applying Lemma \ref{lem1}, one can extend this solution uniquely to the maximal time of existence $T_\text{max}$.
In the rest of this section, we always assume that $(J, u, \bm{w}, \bm{h}, P)$ is a strong solution to system (\ref{mhd}) subject to (\ref{ini}) on $\mathbb R\times(0,T)$ for any $T\in(0,T_\text{max})$.

A series of a priori estimates for $(J, u, \bm{w}, \bm{h}, P)$ are established in the rest of this section, which are crucial in the next section to show the global well-posedness. In the rest of this section, it is always assumed that $J_0 \equiv 1$. In order to simplify the presentations,
we denote by $C$ a general positive constant, which depends only on $\mu, \lambda, \nu, \bar\rho, \|\rho_0\|_1, E_0, \|(u_0',\bm{w}_0')\|_2+\|(\bm{h}_0,P_0)\|_{H^1},$ and $T$, is continuous in $T\in[0,\infty)$, and is finite for any finite $T$, where
$$
E_0:=\int_{\mathbb{R}}\left(\frac{\rho_0 u_0^2}{2}+\frac{\rho_0 |\bm{w_0}|^2}{2}+\frac{|\bm{h_0}|^2}{8\pi}+\frac{P_0}{\gamma-1}\right)\mathrm{d}y.
$$

We start with the following three lemmas of which the proof is exactly the same as that of Lemma 3.2, Lemma 3.3, and Lemma 3.4 in \cite{LIJLIM2022}
and thus is omitted here for simplicity.

\begin{lemma}\label{leme0}
It holds that
$$
\int_{\mathbb{R}}\left(\frac{\rho_0 u^2}{2}+\frac{\rho_0 |\bm{w}|^2}{2}+\frac{J|\bm{h}|^2}{8\pi}+\frac{JP}{\gamma-1}\right)\mathrm{d}y=E_0.
$$
\end{lemma}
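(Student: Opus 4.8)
The plan is to prove the stronger statement that the energy
$$
E(t):=\int_{\mathbb R}\left(\frac{\rho_0u^2}{2}+\frac{\rho_0|\bm{w}|^2}{2}+\frac{J|\bm{h}|^2}{8\pi}+\frac{JP}{\gamma-1}\right)\mathrm{d}y
$$
is independent of $t$, so that $E(t)\equiv E(0)=E_0$; the argument is a term-by-term differentiation in time combined with integration by parts, the essential point being that, in the absence of heat conductivity, every viscous and resistive dissipation is fed back exactly into the internal energy, so that the balance closes as an equality rather than an inequality. First I would differentiate the two kinetic parts: testing $(\ref{mhd})_2$ with $u$ and $(\ref{mhd})_3$ with $\bm{w}$, integrating by parts, and using $\bm{h}\cdot\bm{h}_y=\frac12(|\bm{h}|^2)_y$ to rewrite the Lorentz term, I would obtain
$$
\frac{\mathrm{d}}{\mathrm{d}t}\int_{\mathbb R}\frac{\rho_0u^2}{2}\,\mathrm{d}y=\int_{\mathbb R}u_yP\,\mathrm{d}y+\frac1{8\pi}\int_{\mathbb R}u_y|\bm{h}|^2\,\mathrm{d}y-\lambda\int_{\mathbb R}\frac{u_y^2}{J}\,\mathrm{d}y
$$
and
$$
\frac{\mathrm{d}}{\mathrm{d}t}\int_{\mathbb R}\frac{\rho_0|\bm{w}|^2}{2}\,\mathrm{d}y=-\frac1{4\pi}\int_{\mathbb R}\bm{w}_y\cdot\bm{h}\,\mathrm{d}y-\mu\int_{\mathbb R}\frac{|\bm{w}_y|^2}{J}\,\mathrm{d}y.
$$

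Next, using $J_t=u_y$ from $(\ref{mhd})_1$, the product rule, and $(\ref{mhd})_4$ and $(\ref{mhd})_5$ for the time derivatives of $\bm{h}$ and $P$, I would compute the magnetic and internal contributions
$$
\frac{\mathrm{d}}{\mathrm{d}t}\int_{\mathbb R}\frac{J|\bm{h}|^2}{8\pi}\,\mathrm{d}y=-\frac1{8\pi}\int_{\mathbb R}u_y|\bm{h}|^2\,\mathrm{d}y+\frac1{4\pi}\int_{\mathbb R}\bm{w}_y\cdot\bm{h}\,\mathrm{d}y-\frac{\nu}{4\pi}\int_{\mathbb R}\frac{|\bm{h}_y|^2}{J}\,\mathrm{d}y
$$
and
$$
\frac{\mathrm{d}}{\mathrm{d}t}\int_{\mathbb R}\frac{JP}{\gamma-1}\,\mathrm{d}y=-\int_{\mathbb R}u_yP\,\mathrm{d}y+\int_{\mathbb R}\left(\lambda\frac{u_y^2}{J}+\mu\frac{|\bm{w}_y|^2}{J}+\frac{\nu}{4\pi}\frac{|\bm{h}_y|^2}{J}\right)\mathrm{d}y.
$$
Summing these four identities, the cancellations can be read off directly: the pressure-work terms $\pm\int_{\mathbb R}u_yP$ cancel; the three dissipation integrals produced by $(\ref{mhd})_5$ exactly match the viscous and resistive losses from the kinetic and magnetic energies; the magnetic-pressure terms $\pm\frac1{8\pi}\int_{\mathbb R}u_y|\bm{h}|^2$ cancel; and the induction/forcing terms $\pm\frac1{4\pi}\int_{\mathbb R}\bm{w}_y\cdot\bm{h}$ cancel. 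Hence $\frac{\mathrm{d}}{\mathrm{d}t}E(t)=0$, and integrating in time together with $J_0\equiv1$ yields the claim.

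The only genuinely delicate point, and the place where the far field vacuum enters, is to justify the integrations by parts on the whole line, i.e.\ that the boundary contributions at $\pm\infty$ vanish. I would do this by integrating first over a finite interval $(-R,R)$ and letting $R\to\infty$ along a suitable sequence $R_n$. The boundary terms are products such as $uP$, $u|\bm{h}|^2$, $\bm{w}\cdot\bm{h}$, $\lambda u\frac{u_y}{J}$, $\mu\bm{w}\cdot\frac{\bm{w}_y}{J}$ and $\nu\bm{h}\cdot\frac{\bm{h}_y}{J}$; those carrying a factor $\bm{h}$ or $P$ are controlled by the decay of $\bm{h},P\in H^1$, while for the purely kinetic ones I would combine the $L^2$-integrability of $u_y,\bm{w}_y$ (which yields a sequence $R_n\to\infty$ along which these derivatives decay, since otherwise $\int(|u_y|^2+|\bm{w}_y|^2)$ would diverge) with the at most $O(R^{1/2})$ growth of $u,\bm{w}$ furnished by Cauchy--Schwarz, using also the positive lower and upper bounds of $J$ from Definition \ref{DefLocal}. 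This is precisely the step that makes the entropy-free energy identity compatible with vacuum, and once it is in place the algebraic cancellation above delivers the stated equality.
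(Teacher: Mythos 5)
Your proof is correct, and it is essentially the argument the paper has in mind: the paper omits the proof of this lemma, stating it is identical to Lemma 3.2 of \cite{LIJLIM2022}, which is precisely this standard energy-balance computation (testing $(\ref{mhd})_2$ and $(\ref{mhd})_3$ with $u$ and $\bm{w}$, combining with $(\ref{mhd})_1$, $(\ref{mhd})_4$, $(\ref{mhd})_5$, and observing that all pressure-work, magnetic-pressure, induction, and dissipation terms cancel exactly because the dissipation is fed back into the internal energy). Your additional care with the boundary terms at infinity, via truncation and a sequence $R_n\to\infty$ chosen using the $L^2$-integrability of the derivatives and the $O(R^{1/2})$ growth of $u,\bm{w}$, is exactly the right way to make the integrations by parts rigorous in the presence of far field vacuum.
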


\begin{lemma}\label{lj}
It holds that
\begin{equation}\nonumber
\inf_{(y,t)\in\mathbb R\times(0,T)}J(y,t)\ge e^{-\frac{2\sqrt{2}}{\lambda}\sqrt{\| \rho_0 \|_1 E_0}}=:\underline{J}.
\end{equation}
\end{lemma}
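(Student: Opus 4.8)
The plan is to derive a pointwise-in-$y$ representation for $\log J$ and to bound its time integral from below using only the basic energy identity of Lemma~\ref{leme0}. The starting point is that, by $(\ref{mhd})_1$, $\frac{u_y}{J}=\frac{J_t}{J}=(\log J)_t$, so the definition of the effective viscous flux $G$ yields
\begin{equation*}
\lambda(\log J)_t=\lambda\frac{u_y}{J}=G+P+\frac{|\bm{h}|^2}{8\pi}.
\end{equation*}
Since $J_0\equiv1$, integrating in time gives $\lambda\log J(y,t)=\int_0^t G\,ds+\int_0^t\big(P+\frac{|\bm{h}|^2}{8\pi}\big)\,ds$, and because $P\ge0$ and $|\bm{h}|^2\ge0$ the second integral is nonnegative. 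Hence it suffices to bound $\int_0^tG\,ds$ from below by $-2\sqrt{2\|\rho_0\|_1E_0}$.

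To control $\int_0^tG\,ds$, I would exploit the momentum equation written through $G$. Using $\bm{h}\cdot\bm{h}_y=\frac12(|\bm{h}|^2)_y$, equation $(\ref{mhd})_2$ reduces precisely to $\rho_0u_t=G_y$. Introduce the mass-weighted primitive $\zeta(y,t):=\int_{-\infty}^y\rho_0(x)u(x,t)\,dx$, which is well defined since $\rho_0u=\sqrt{\rho_0}\cdot\sqrt{\rho_0}u\in L^1$. Differentiating in $t$ and using $\rho_0u_t=G_y$ gives $\zeta_t=\int_{-\infty}^yG_y\,dx=G(y,t)-\lim_{x\to-\infty}G(x,t)$. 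The key point is that this far-field limit vanishes: for a.e.\ $t$ one has $G(\cdot,t)\in L^2$ (as $u_y,P\in L^2$ and $|\bm{h}|^2\le\|\bm{h}\|_\infty|\bm{h}|\in L^2$) while $G_y=\rho_0u_t=\sqrt{\rho_0}\cdot\sqrt{\rho_0}u_t\in L^1$, so $G(\cdot,t)$ is absolutely continuous with a limit at $-\infty$ which, being the limit of an $L^2$ function, must be zero. Therefore $\zeta_t=G$ and $\int_0^tG\,ds=\zeta(y,t)-\zeta(y,0)$.

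It then remains to estimate $\zeta$ uniformly. By Cauchy--Schwarz and Lemma~\ref{leme0},
\begin{equation*}
|\zeta(y,t)|\le\Big(\int_{-\infty}^y\rho_0\,dx\Big)^{1/2}\Big(\int_{-\infty}^y\rho_0u^2\,dx\Big)^{1/2}\le\|\rho_0\|_1^{1/2}(2E_0)^{1/2}=\sqrt{2\|\rho_0\|_1E_0},
\end{equation*}
and the same bound holds for $\zeta(y,0)$ since $\int\rho_0u_0^2\le2E_0$. Consequently $\int_0^tG\,ds\ge-2\sqrt{2\|\rho_0\|_1E_0}$, and plugging this into the representation for $\log J$ gives $\lambda\log J(y,t)\ge-2\sqrt{2\|\rho_0\|_1E_0}$, i.e.\ $J\ge e^{-\frac{2\sqrt2}{\lambda}\sqrt{\|\rho_0\|_1E_0}}$, which is exactly $\underline{J}$.

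The only genuinely delicate step is the justification of $\lim_{x\to-\infty}G(x,t)=0$ in the presence of far-field vacuum; this is what makes the mass-weighted primitive $\zeta$ the right object, since it is anchored at $-\infty$ where the integrated momentum is automatically controlled by the finite mass and finite energy. Everything else is an elementary integration combined with the conservation of energy, and the argument is manifestly insensitive to whether vacuum occurs at a point, on an interval, or at the far field, which is precisely why it replaces the entropy-type estimate used in the earlier literature.
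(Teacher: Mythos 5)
Your proof is correct and follows essentially the same route as the paper's (which is omitted there and deferred to \cite{LIJLIM2022}, but whose ingredients appear later in the paper): your identity $\zeta_t=G$ is exactly the paper's equation (\ref{h2}), obtained by integrating the momentum equation from $-\infty$ to $y$, and your Cauchy--Schwarz bound on $\zeta$ via Lemma \ref{leme0} is precisely the estimate (\ref{ESTA}). The only cosmetic differences are that you phrase the argument through $G$ rather than $\lambda u_y/J$ and that you justify $\lim_{y\to-\infty}G=0$ rigorously (via $G\in L^2$, $G_y=\rho_0u_t\in L^1$), where the paper simply invokes the far-field conditions.
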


%
%

\begin{lemma}\label{lemom1}
It holds that
$$
\sup_{0\leq t \leq T}\|\sqrt{\rho_0}\bm{w}\|_{2}^2+ \int_{0}^{T}\left\|\frac{\bm{w}_y}{\sqrt{J}}\right\|_{2}^2\mathrm{d}t \leq C.
$$
\end{lemma}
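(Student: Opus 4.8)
The plan is to run a weighted energy estimate for the transverse velocity by testing the third equation of $(\ref{mhd})$ against $\bm{w}$, and to close it by invoking the basic energy identity of Lemma \ref{leme0} to absorb the magnetic coupling. First I would multiply $(\ref{mhd})_3$, namely $\rho_0\bm{w}_t-\frac{1}{4\pi}\bm{h}_y=\mu(\frac{\bm{w}_y}{J})_y$, by $\bm{w}$ and integrate over $\mathbb R$. Since $\rho_0$ is independent of $t$, the time term becomes $\frac{1}{2}\frac{d}{dt}\|\sqrt{\rho_0}\bm{w}\|_2^2$, which I would keep in this weighted form rather than working with $\bm{w}$ directly; this matters because under far field vacuum only $\sqrt{\rho_0}\bm{w}$, and not $\bm{w}$ itself, is guaranteed to lie in $L^2$. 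Integrating the viscous term by parts yields exactly $-\mu\|\frac{\bm{w}_y}{\sqrt J}\|_2^2$, the dissipation we want, while integrating the magnetic term by parts produces the coupling $\frac{1}{4\pi}\int_{\mathbb R}\bm{h}\cdot\bm{w}_y\,dy$; the boundary contributions at $\pm\infty$ vanish thanks to the decay built into the strong-solution class. This gives the identity
\begin{equation}\nonumber
\frac{1}{2}\frac{d}{dt}\|\sqrt{\rho_0}\bm{w}\|_2^2+\mu\left\|\frac{\bm{w}_y}{\sqrt J}\right\|_2^2=\frac{1}{4\pi}\int_{\mathbb R}\bm{h}\cdot\bm{w}_y\,dy.
\end{equation}

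The crux is to control the coupling term compatibly with the weighted dissipation. I would write $\bm{w}_y=\sqrt J\cdot\frac{\bm{w}_y}{\sqrt J}$ and apply Cauchy--Schwarz together with Young's inequality, so that
\begin{equation}\nonumber
\left|\frac{1}{4\pi}\int_{\mathbb R}\bm{h}\cdot\bm{w}_y\,dy\right|\leq\frac{\mu}{2}\left\|\frac{\bm{w}_y}{\sqrt J}\right\|_2^2+C\int_{\mathbb R}J|\bm{h}|^2\,dy.
\end{equation}
The point of inserting the factor $\sqrt J$ is that the leftover magnetic integral carries precisely the weight $J$ appearing in Lemma \ref{leme0}; hence $\int_{\mathbb R}J|\bm{h}|^2\,dy\leq 8\pi E_0\leq C$ uniformly in $t$. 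Absorbing the first term on the right into the left-hand side yields
\begin{equation}\nonumber
\frac{d}{dt}\|\sqrt{\rho_0}\bm{w}\|_2^2+\mu\left\|\frac{\bm{w}_y}{\sqrt J}\right\|_2^2\leq C.
\end{equation}

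Finally I would integrate in time over $(0,T)$; since $\|\sqrt{\rho_0}\bm{w}_0\|_2^2$ is part of the initial data bound and $T$ is finite, the right-hand side contributes only $\|\sqrt{\rho_0}\bm{w}_0\|_2^2+CT\leq C$, which delivers both asserted bounds simultaneously. I do not anticipate a genuine analytic obstacle for this lemma: the only delicate point is to avoid overreaching toward an $L^2$ bound on $\bm{w}$ that is unavailable under far field vacuum, and correspondingly to choose the $\sqrt J$ splitting so that the coupling term closes against the already-established basic energy rather than against some stronger, not-yet-available norm of $\bm{h}$. The regularity in Definition \ref{DefLocal} (in particular $\sqrt{\rho_0}\bm{w}_t\in L^2(0,T;L^2)$, $\bm{w}_y\in L^\infty(0,T;L^2)$, and $\bm{h}\in C([0,T];H^1)$) is exactly what is needed to justify all the integrations by parts and the pairings above.
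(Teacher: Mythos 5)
Your proof is correct and follows essentially the same route as the paper's (which is omitted there, being identical to Lemma 3.4 of \cite{LIJLIM2022}): test $(\ref{mhd})_3$ with $\bm{w}$, integrate by parts, split the coupling term with the weight $\sqrt J$, and close via the bound $\int_{\mathbb R} J|\bm h|^2\,\mathrm{d}y\leq 8\pi E_0$ from Lemma \ref{leme0}. The only slip is a harmless sign: the coupling term should appear as $-\frac{1}{4\pi}\int_{\mathbb R}\bm{h}\cdot\bm{w}_y\,\mathrm{d}y$ on the right-hand side of your energy identity, which is immaterial since you immediately estimate its absolute value.
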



The next lemma gives the estimate on $\bm{h}$.

\begin{lemma}\label{lemh}
It holds that
 \begin{equation*}\label{hh}
 \sup_{0\leq t\leq T}\|\sqrt J\bm{h}\|_2^2 +\int_0^T\left\|\frac{\bm{h}_y}{\sqrt{J}}\right\|^2_{2}\mathrm{d}t+\int_0^T\int \left(P+\frac{|\bm{h}|^2}{8\pi}\right)J|\bm{h}|^2\mathrm{d}y\mathrm{d}t\leq C.
\end{equation*}
\end{lemma}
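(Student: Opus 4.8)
The plan is to follow the strategy outlined after Theorem~\ref{thm}: convert $(\ref{mhd})_4$ into a scalar transport/ODE identity for $g:=J|\bm h|^2$ and integrate it with a well-chosen integrating factor. First I would integrate the momentum equation $(\ref{mhd})_2$ in $y$ from $-\infty$ and use the far-field decay to record the pointwise identity
\[
\lambda\frac{u_y}{J}=\Phi_t+P+\frac{|\bm h|^2}{8\pi},\qquad \Phi(y,t):=\int_{-\infty}^y\rho_0 u\,\mathrm dx .
\]
Multiplying $(\ref{mhd})_4$ by $2J\bm h$ and using $J_t=u_y$ together with this identity gives, exactly as quoted in the introduction,
\[
g_t+\frac1\lambda\Big(\Phi_t+P+\frac{|\bm h|^2}{8\pi}\Big)g=2\nu\,\bm h\cdot\Big(\frac{\bm h_y}{J}\Big)_y+2\bm w_y\cdot\bm h .
\]
The crucial gain is that the dangerous coefficient $u_y/J$, which multiplies $u$ rather than $\sqrt{\rho_0}u$, is now packaged into $\Phi_t$, and $\Phi$ is \emph{uniformly bounded}: by Cauchy--Schwarz and Lemma~\ref{leme0}, $|\Phi|\le\sqrt{\|\rho_0\|_1}\,\|\sqrt{\rho_0}u\|_2\le\sqrt{2\|\rho_0\|_1E_0}=:M$.

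Next I would use $e^{\Phi/\lambda}$ as integrating factor to eliminate the $\Phi_t$ term. Since $\big(e^{\Phi/\lambda}g\big)_t=e^{\Phi/\lambda}\big(g_t+\tfrac1\lambda\Phi_t g\big)$, the equation becomes
\[
\big(e^{\Phi/\lambda}g\big)_t+\frac1\lambda\Big(P+\frac{|\bm h|^2}{8\pi}\Big)e^{\Phi/\lambda}g=e^{\Phi/\lambda}\Big(2\nu\,\bm h\cdot\Big(\frac{\bm h_y}{J}\Big)_y+2\bm w_y\cdot\bm h\Big),
\]
and the weight is harmless because $e^{-M/\lambda}\le e^{\Phi/\lambda}\le e^{M/\lambda}$. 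Integrating over $\mathbb R\times(0,t)$, integrating by parts in $y$ in the resistive term (boundary terms vanish by the $H^1$-decay of $\bm h$ and the lower bound $\underline J$ of Lemma~\ref{lj}), and keeping the manifestly nonnegative term $\frac1\lambda(P+|\bm h|^2/8\pi)e^{\Phi/\lambda}g$ on the left, I obtain a master identity of the form
\[
\int_{\mathbb R}e^{\Phi/\lambda}g\,\mathrm dy+2\nu\!\int_0^t\!\!\int_{\mathbb R}e^{\Phi/\lambda}\frac{|\bm h_y|^2}{J}\,\mathrm dy\,\mathrm ds+\frac1\lambda\!\int_0^t\!\!\int_{\mathbb R}\Big(P+\frac{|\bm h|^2}{8\pi}\Big)e^{\Phi/\lambda}g\,\mathrm dy\,\mathrm ds=\text{(initial data)}+R_1+R_2,
\]
where $R_1=-\frac{2\nu}\lambda\int_0^t\!\!\int e^{\Phi/\lambda}\rho_0u\,\bm h\cdot\frac{\bm h_y}{J}$ arises from differentiating the weight (note $\Phi_y=\rho_0u$) and $R_2=2\int_0^t\!\!\int e^{\Phi/\lambda}\bm w_y\cdot\bm h$. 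Up to the fixed factors $e^{\pm M/\lambda}$ and $\underline J$, the three left-hand terms are precisely the three quantities the lemma controls, and the initial term is $\le e^{M/\lambda}\|\bm h_0\|_2^2\le C$.

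It then remains to absorb $R_1$ and $R_2$. For $R_1$ I would split $\rho_0|u||\bm h|\frac{|\bm h_y|}{J}=\frac{|\bm h_y|}{\sqrt J}\cdot\frac{\rho_0|u||\bm h|}{\sqrt J}$ and apply Young's inequality, then use $\rho_0^2/J\le(\bar\rho/\underline J)\rho_0$, the one-dimensional interpolation $\|\bm h\|_\infty^2\le2\|\sqrt J\bm h\|_2\|\bm h_y/\sqrt J\|_2$ (which needs only $J>0$, not an upper bound on $J$), and $\|\sqrt{\rho_0}u\|_2^2\le2E_0$; a second application of Young bounds $R_1$ by $\varepsilon\int_0^t\|\bm h_y/\sqrt J\|_2^2+C\int_0^t\|\sqrt J\bm h\|_2^2$. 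For $R_2$ I would write $|\bm w_y||\bm h|=\frac{|\bm w_y|}{\sqrt J}\cdot\sqrt J|\bm h|$ and invoke Lemma~\ref{lemom1} to get $R_2\le C+C\int_0^t\|\sqrt J\bm h\|_2^2$. Taking $\varepsilon$ small to absorb the dissipation into the left-hand side leaves a Gronwall inequality for $A(t):=\|\sqrt J\bm h\|_2^2$, namely $A(t)\le C+C\int_0^tA\,\mathrm ds$, so $A$ is bounded on $[0,T]$; feeding this back then controls both time integrals and closes the estimate. I expect the main obstacle to be exactly the step motivating the argument: the term $\frac{u_y}{J}\bm h$ in $(\ref{mhd})_4$ defeats a naive energy test because only $\sqrt{\rho_0}u$, not $u$, is controlled near the vacuum, and it is the integrating factor built from the energy-bounded quantity $\Phi=\int_{-\infty}^y\rho_0u$ that resolves it; the secondary difficulty—that differentiating this weight reintroduces $\bm h_y$—is handled by the weighted Sobolev bound for $\|\bm h\|_\infty$ and absorption into the resistive dissipation.
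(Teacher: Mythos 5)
Your proposal is correct and follows essentially the same route as the paper: the key identity $\lambda u_y/J=\Phi_t+P+|\bm h|^2/(8\pi)$ from integrating $(\ref{mhd})_2$, the uniformly bounded integrating factor $e^{\Phi/\lambda}$ built from the energy-controlled quantity $\Phi=\int_{-\infty}^y\rho_0u\,\mathrm dx$, integration by parts in the resistive term (whose weight-derivative contribution $\rho_0u\,\bm h\cdot\bm h_y/J$ is absorbed via $\|\bm h\|_\infty^2\le2\|\sqrt J\bm h\|_2\|\bm h_y/\sqrt J\|_2$ and Young), and Lemma~\ref{lemom1} for the $\bm w_y\cdot\bm h$ term. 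The only cosmetic differences are that the paper writes the explicit Duhamel (solved-ODE) form of $J|\bm h|^2$ before integrating in space, and invokes the energy bound $\|\sqrt J\bm h\|_2^2\le 8\pi E_0$ from Lemma~\ref{leme0} directly instead of your final Gr\"onwall step.
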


\begin{proof}
Multiplying $(\ref{mhd})_4$ with $2J\bm{h}$, one could get
 \begin{equation}\label{h1}
(J|\bm{h}|^2)_t+{u_y}|\bm{h}|^2-{2\nu\bm{h}\cdot}\left(\frac{\bm{h}_y}{J}\right)_y=2{\bm{w}_y}\cdot\bm{h}. 	
\end{equation}
Recalling the momentum equation
\begin{equation*}
\rho_0  u_t+P_y+\frac{1}{4\pi}\bm{h}\cdot\bm{h}_y=\lambda \left(\frac{u_y}{J}\right)_y,
 \end{equation*}
and integrating it in the spatial variable over $(z,y)$, letting $z\rightarrow-\infty$, and noticing that $J \rightarrow 1,~{u}_y \rightarrow 0,~\bm{h} \rightarrow 0,$ and $P \rightarrow 0$, as $z \rightarrow -\infty$, one gets that
\begin{equation}\label{h2}
\frac{u_y}{J}=\frac{1}{\lambda}\left(\partial_t\int_{-\infty}^y\rho_0 udx +P+\frac{|\bm{h}|^2}{8\pi}\right).	
\end{equation}
Inserting $(\ref{h2})$ into $(\ref{h1})$, one obtains,
 \begin{equation}\label{h3}
(J|\bm{h}|^2)_t+\frac{1}{\lambda}\left(\partial_t\int_{-\infty}^y\rho_0 udx +P+\frac{|\bm{h}|^2}{8\pi}\right)J|\bm{h}|^2={2\nu\bm{h}\cdot}\left(\frac{\bm{h}_y}{J}\right)_y+2{\bm{w}_y}\cdot\bm{h}.
\end{equation}
Denote
\begin{equation}\label{A}
A(y,t):=-\frac{\rho_0(y)u(y,t)}{\lambda}.
\end{equation}
Then, one could obtain from (\ref{h3}) that
 \begin{equation}\label{h4}
 \begin{aligned}
J|\bm{h}|^2=&e^{\int_{-\infty}^y A(z,t)\mathrm{d}z}J_0|\bm{h_0}|^2
+\int_0^t e^{\int_{-\infty}^y \left(A(z,t)-A(z,s)\right)\mathrm{d}z}\\
&\times\left({2\nu\bm{h}\cdot}\left(\frac{\bm{h}_y}{J}\right)_y+2{\bm{w}_y}\cdot\bm{h}-\frac{1}{\lambda}\left(P+\frac{|\bm{h}|^2}{8\pi}\right)J|\bm{h}|^2\right)\mathrm{d}s.
\end{aligned}
\end{equation}
Note that by the H\"older inequality and using Lemma \ref{leme0} it follows that
\begin{equation}
	\left|\int_{-\infty}^y A(z,t)\mathrm{d}z\right|\leq\frac1\lambda\left(\int_{-\infty}^y\rho_0 dz\right)^\frac12\left(\int_{-\infty}^y
\rho_0u^2dz\right)^\frac12\leq \frac{\sqrt 2}{\lambda}\sqrt{\| \rho_0 \|_1 E_0}.\label{ESTA}
\end{equation}
Thanks to this, it follows from integration by parts and the H\"older inequality that
 \begin{equation}\label{h43}
 \begin{aligned}
&\int \int_0^t e^{\int_{-\infty}^y \left(A(z,t)-A(z,s)\right)\mathrm{d}z}\bm{h}\cdot\left(\frac{\bm{h}_y}{J}\right)_y\mathrm{d}s\mathrm{d}y\\
=&-\int \int_0^t e^{\int_{-\infty}^y \left(A(z,t)-A(z,s)\right)\mathrm{d}z}\left(\frac{|\bm{h}_y|^2}{J}+\frac1\lambda\rho_0(y){\left(u(y,s)-u(y,t)\right)} \frac{\bm{h}\cdot\bm{h}_y}{J}\right)\mathrm{d}s\mathrm{d}y\\
\leq&- e^{-\frac{2\sqrt 2}{\lambda}\sqrt{\| \rho_0 \|_1 E_0}}\int_0^t\left\|\frac{\bm{h}_y}{\sqrt{J}}\right\|^2_{2}\mathrm{d}s\\
&+ e^{\frac{2\sqrt2}{\lambda}\sqrt{\| \rho_0 \|_1 E_0}}\frac1\lambda\sqrt{\frac{\bar\rho}{\underline J}}\int_0^t(\|\sqrt{\rho_0}u\|_2(s)+\|\sqrt{\rho_0}u\|_2(t)) \left\|\frac{\bm{h}_y}{\sqrt{J}}\right\|_{2}\|\bm{h}\|_\infty \mathrm{d}s\\
\leq&- e^{-\frac{2\sqrt 2}{\lambda}\sqrt{\| \rho_0 \|_1 E_0}}\int_0^t\left\|\frac{\bm{h}_y}{\sqrt{J}}\right\|^2_{2}\mathrm{d}s+C \int_0^t \left\|\frac{\bm{h}_y}{\sqrt{J}}\right\|_{2}\|\bm{h}\|_\infty \mathrm{d}s,
\end{aligned}
\end{equation}
where Lemma \ref{leme0} and Lemma \ref{lj} were used.
Note that
\begin{equation}\label{binf}
	\|\bm{h}\|_{\infty}^2\leq\int|\partial_y|\bm{h}|^2|\mathrm{d}y\leq 2\|\sqrt{J}\bm{h}\|_{2}\left\|\frac{\bm{h}_y}{\sqrt{J}}\right\|_{2}.
\end{equation}
Inserting $(\ref{binf})$ into $(\ref{h43})$ and using the Young inequality yield
\begin{equation}\label{h431}
 \begin{aligned}
&\int \int_0^t e^{\int_{-\infty}^y \left(A(z,t)-A(z,s)\right)\mathrm{d}z}\bm{h}\cdot\left(\frac{\bm{h}_y}{J}\right)_y\mathrm{d}s\mathrm{d}y\\
\leq&- e^{-\frac{2\sqrt 2}{\lambda}\sqrt{\| \rho_0 \|_1 E_0}}\int_0^t\left\|\frac{\bm{h}_y}{\sqrt{J}}\right\|^2_{2}\mathrm{d}s+ C \int_0^t\left\|\frac{\bm{h}_y}{\sqrt{J}}\right\|^{\frac{3}{2}}_{2}\|\sqrt{J}\bm{h}\|^{\frac{1}{2}}_{2} \mathrm{d}s\\
\leq&- e^{-\frac{2\sqrt 2}{\lambda}\sqrt{\| \rho_0 \|_1 E_0}}\int_0^t\left\|\frac{\bm{h}_y}{\sqrt{J}}\right\|^2_{2}\mathrm{d}s+ \epsilon \int_0^t\left\|\frac{\bm{h}_y}{\sqrt{J}}\right\|^{2}_{2} \mathrm{d}s+C_{\epsilon},
\end{aligned}
\end{equation}
for any positive $\epsilon$, where we have used Lemma \ref{leme0}.
One deduces by (\ref{ESTA}) and the H\"older inequality that
 \begin{equation}\label{h42}
 \begin{aligned}
&\left|\int \int_0^t e^{\int_{-\infty}^y \left(A(z,t)-A(z,s)\right)\mathrm{d}z}{\bm{w}_y}\cdot\bm{h}\mathrm{d}s\mathrm{d}y\right|\\
\leq &e^{\frac{2\sqrt2}{\lambda}\sqrt{\| \rho_0 \|_1 E_0}}\int_0^t\left\|\frac{\bm{w}_y}{\sqrt{J}}\right\|_{2}\|{\sqrt{J}}\bm{h}\|_2\mathrm{d}s
 \leq C,
\end{aligned}
\end{equation}
with the aid of Lemma \ref{leme0} and Lemma \ref{lemom1}.
Using (\ref{ESTA}) again yields
 \begin{equation}\label{h41}
 \begin{aligned}
\int \int_0^t e^{\int_{-\infty}^y \left(A(z,t)-A(z,s)\right)\mathrm{d}z}\frac{1}{\lambda}\left(P+\frac{|\bm{h}|^2}{8\pi}\right)J|\bm{h}|^2\mathrm{d}s\mathrm{d}y&\\
\geq\frac{1}{\lambda}e^{-\frac{2\sqrt2}{\lambda}\sqrt{\| \rho_0 \|_1 E_0}}\int \int_0^t\left(P+\frac{|\bm{h}|^2}{8\pi}\right)J|\bm{h}|^2\mathrm{d}s\mathrm{d}y&.
\end{aligned}
\end{equation}

Finally, integrating $(\ref{h4})$ over $\mathbb{R}$, combining $(\ref{h431})$--$(\ref{h41})$, and choosing $\epsilon$ small enough, it follows that
 \begin{equation*}\label{h5}
 \begin{aligned}
\left(\int J|\bm{h}|^2\mathrm{d}y\right)(t) +\int_0^t\left\|\frac{\bm{h}_y}{\sqrt{J}}\right\|^2_{2}\mathrm{d}s+\int \int_0^t\left(P+\frac{|\bm{h}|^2}{8\pi}\right)J|\bm{h}|^2\mathrm{d}s\mathrm{d}y
\leq C,
\end{aligned}
\end{equation*}
proving the conclusion.
\end{proof}

Thanks to Lemma \ref{lemh}, we are able to give the estimate on the ``transverse effective viscous flux" $\bm{F}$ expressed as
\begin{equation}\label{Fdef}
	\bm{F}:=\mu\frac{\bm{w}_y}{J}+\frac{\bm{h}}{4\pi},
\end{equation}
which is introduced in \cite{LIJLIM2022}.

\begin{lemma}\label{lemtr}
It holds that
$$
\sup_{0\leq t \leq T}\|\sqrt{J}\bm{F}\|_{2}^2+ \int_{0}^{T}\left(
\left\|\frac{\bm{F}_y}{\sqrt{\rho_0}}\right\|_{2}^2+\|\bm{F}\|_{\infty}^4\right)\mathrm{d}t\leq C.
$$
\end{lemma}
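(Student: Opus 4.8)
The plan is to run a weighted energy estimate on the evolution equation for $\bm{F}$ recalled in the Introduction. First I would multiply that equation by $2J\bm{F}$ and integrate over $\mathbb{R}$. Using $J_t=u_y$ to write $\int J\partial_t|\bm{F}|^2\,\mathrm{d}y=\frac{d}{dt}\|\sqrt{J}\bm{F}\|_2^2-\int u_y|\bm{F}|^2\,\mathrm{d}y$ and integrating the dissipative term by parts, this produces the identity
\begin{equation*}
\frac{d}{dt}\|\sqrt{J}\bm{F}\|_2^2+2\mu\left\|\frac{\bm{F}_y}{\sqrt{\rho_0}}\right\|_2^2
=-\int u_y|\bm{F}|^2\,\mathrm{d}y+\frac{1}{2\pi}\int\bm{F}\cdot\bm{w}_y\,\mathrm{d}y+\frac{\nu}{2\pi}\int\bm{F}\cdot\left(\frac{\bm{h}_y}{J}\right)_y\mathrm{d}y.
\end{equation*}
The strategy is to absorb the dissipation-type contributions on the right into the second term on the left and to close the rest by Gronwall, invoking the already established Lemmas \ref{leme0}, \ref{lemom1} and \ref{lemh}. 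A basic tool throughout is the interpolation bound $\|\bm{F}\|_\infty^2\le 2\int|\bm{F}||\bm{F}_y|\,\mathrm{d}y\le C\|\sqrt{J}\bm{F}\|_2\left\|\frac{\bm{F}_y}{\sqrt{\rho_0}}\right\|_2$, obtained exactly as in (\ref{binf}) after inserting the harmless factor $\sqrt{\rho_0}/\sqrt{J}$, which is bounded thanks to $\rho_0\le\bar\rho$ and Lemma \ref{lj}.

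For the two easier terms, writing $\bm{w}_y=\frac{J}{\mu}\big(\bm{F}-\frac{\bm{h}}{4\pi}\big)$ from (\ref{Fdef}), the term $\frac{1}{2\pi}\int\bm{F}\cdot\bm{w}_y\,\mathrm{d}y$ becomes $\frac{1}{2\pi\mu}\|\sqrt{J}\bm{F}\|_2^2-\frac{1}{8\pi^2\mu}\int J\bm{F}\cdot\bm{h}\,\mathrm{d}y$, and the last integral is controlled by $C\|\sqrt{J}\bm{F}\|_2^2+C\|\sqrt{J}\bm{h}\|_2^2$ with $\|\sqrt{J}\bm{h}\|_2$ bounded by Lemma \ref{lemh}; both contributions are Gronwall-admissible. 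For the resistive term I would integrate by parts to get $-\frac{\nu}{2\pi}\int\bm{F}_y\cdot\frac{\bm{h}_y}{J}\,\mathrm{d}y$, bound it by $\frac{\nu\sqrt{\bar\rho}}{2\pi\sqrt{\underline J}}\left\|\frac{\bm{F}_y}{\sqrt{\rho_0}}\right\|_2\left\|\frac{\bm{h}_y}{\sqrt{J}}\right\|_2$, and apply Young's inequality, splitting off $\epsilon\left\|\frac{\bm{F}_y}{\sqrt{\rho_0}}\right\|_2^2$ plus $C_\epsilon\left\|\frac{\bm{h}_y}{\sqrt{J}}\right\|_2^2$; the latter is integrable in time by Lemma \ref{lemh}.

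The main obstacle is the term $-\int u_y|\bm{F}|^2\,\mathrm{d}y$, since in the presence of far field vacuum only $\sqrt{\rho_0}u$, not $u_y$ itself, is controlled. The key idea is to integrate by parts in $y$, converting the uncontrolled factor $u_y$ into $u$:
\begin{equation*}
-\int u_y|\bm{F}|^2\,\mathrm{d}y=2\int u\,\bm{F}\cdot\bm{F}_y\,\mathrm{d}y=2\int(\sqrt{\rho_0}u)\,\bm{F}\cdot\frac{\bm{F}_y}{\sqrt{\rho_0}}\,\mathrm{d}y\le 2\|\sqrt{\rho_0}u\|_2\|\bm{F}\|_\infty\left\|\frac{\bm{F}_y}{\sqrt{\rho_0}}\right\|_2,
\end{equation*}
where the boundary terms vanish because $\bm{F}\in L^2\cap L^\infty$ decays at infinity and the integrand is absolutely integrable. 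Since $\|\sqrt{\rho_0}u\|_2^2\le 2E_0$ by Lemma \ref{leme0}, inserting the interpolation bound for $\|\bm{F}\|_\infty$ gives a quantity dominated by $C\|\sqrt{J}\bm{F}\|_2^{1/2}\left\|\frac{\bm{F}_y}{\sqrt{\rho_0}}\right\|_2^{3/2}$, which Young's inequality splits into $\epsilon\left\|\frac{\bm{F}_y}{\sqrt{\rho_0}}\right\|_2^2+C_\epsilon\|\sqrt{J}\bm{F}\|_2^2$. This is exactly the mechanism, relying only on $\sqrt{\rho_0}u$, that replaces the entropy-type estimate and remains consistent with the vacuum.

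Collecting the three terms and choosing $\epsilon$ small enough to absorb all the $\left\|\frac{\bm{F}_y}{\sqrt{\rho_0}}\right\|_2^2$ contributions into $2\mu\left\|\frac{\bm{F}_y}{\sqrt{\rho_0}}\right\|_2^2$, I obtain a differential inequality of the form $\frac{d}{dt}\|\sqrt{J}\bm{F}\|_2^2+\mu\left\|\frac{\bm{F}_y}{\sqrt{\rho_0}}\right\|_2^2\le C\|\sqrt{J}\bm{F}\|_2^2+g(t)$ with $g\in L^1(0,T)$. Since the initial datum $\bm{F}|_{t=0}=\mu\bm{w}_0'+\frac{\bm{h}_0}{4\pi}\in L^2$, Gronwall's inequality yields the first two bounds, namely $\sup_{0\le t\le T}\|\sqrt{J}\bm{F}\|_2^2+\int_0^T\left\|\frac{\bm{F}_y}{\sqrt{\rho_0}}\right\|_2^2\,\mathrm{d}t\le C$. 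Finally, the $L^\infty$ bound follows from the interpolation inequality: $\|\bm{F}\|_\infty^4\le C\|\sqrt{J}\bm{F}\|_2^2\left\|\frac{\bm{F}_y}{\sqrt{\rho_0}}\right\|_2^2$, so that $\int_0^T\|\bm{F}\|_\infty^4\,\mathrm{d}t\le C\big(\sup_{0\le t\le T}\|\sqrt{J}\bm{F}\|_2^2\big)\int_0^T\left\|\frac{\bm{F}_y}{\sqrt{\rho_0}}\right\|_2^2\,\mathrm{d}t\le C$, completing the proof.
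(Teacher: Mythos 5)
Your proposal is correct and follows essentially the same route as the paper: an energy estimate on the $\bm{F}$ equation tested with $J\bm{F}$, integration by parts to convert the troublesome $\int u_y|\bm{F}|^2\,\mathrm{d}y$ into $2\int(\sqrt{\rho_0}u)\,\bm{F}\cdot\frac{\bm{F}_y}{\sqrt{\rho_0}}\,\mathrm{d}y$, the interpolation bound (\ref{fin}) for $\|\bm{F}\|_\infty$, and Young plus Gr\"onwall using Lemma \ref{lemh} for the resistive term. The only cosmetic difference is your treatment of $\int\bm{w}_y\cdot\bm{F}\,\mathrm{d}y$ via the substitution $\bm{w}_y=\frac{J}{\mu}\big(\bm{F}-\frac{\bm{h}}{4\pi}\big)$, where the paper simply bounds it by $\big\|\frac{\bm{w}_y}{\sqrt{J}}\big\|_2\|\sqrt{J}\bm{F}\|_2$ and invokes Lemma \ref{lemom1}; both are equally valid.
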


\begin{proof}
Recalling the definition of $\bm{F}$ given by (\ref{Fdef}) and by direct calculations, one derives from $(\ref{mhd})_1, (\ref{mhd})_3,$ and $(\ref{mhd})_4$ that
\begin{equation*}
\bm{F}_t-\frac{\mu}{J}\left(\frac{\bm{F}_y}{\rho_0}\right)_y=-\frac{u_y}{J}\bm{F}+\frac{1}{4\pi }\frac{\bm{w}_y}{J}
+\frac{\nu}{4\pi J}\left(\frac{\bm{h}_y}{J}\right)_y. 	
\end{equation*}
Multiplying this with $J\bm{F}$, integrating over $\mathbb{R}$, and integrating by parts yield
\begin{equation}\label{fes}
\begin{aligned}
&\frac{1}{2}\frac{d}{dt}\|\sqrt{J}\bm{F}\|_{2}^2+\mu \left\| \frac{\bm{F}_y}{\sqrt{\rho_0}}\right\|_{2}^2\\
=&-\frac{1}{2}\int_{} u_y |\bm{F}|^2 \mathrm{d}y+\frac{1}{4\pi}\int_{}\bm{w}_y\cdot \bm{F} \mathrm{d}y-\frac{\nu}{4\pi}\int \frac{\bm{h}_y}{J}\cdot \bm{F}_y\mathrm{d}y\\
=&\int_{} u {\bm{F}_y\cdot\bm{F}} \mathrm{d}y+\frac{1}{4\pi}\int_{} {\bm{w}_y}
\cdot\bm{F} \mathrm{d}y-\frac{\nu}{4\pi}\int\frac{\bm{h}_y}{\sqrt{J}}\cdot \frac{\bm{F}_y}{\sqrt{\rho_0}}\frac{\sqrt{\rho_0}}{\sqrt{J}}\mathrm{d}y\\
\leq&\|\sqrt{\rho_0}u\|_2\left\| \frac{\bm{F}_y}{\sqrt{\rho_0}}\right\|_{2}\|\bm{F}\|_\infty+\frac{1}{4\pi}
\left\|\frac{\bm{w}_y}{\sqrt J}\right\|_2\|\sqrt J\bm{F}\|_2+C\left\| \frac{\bm{F}_y}{\sqrt{\rho_0}}\right\|_{2}
\left\|\frac{\bm{h}_y}{\sqrt J}\right\|_2\\
\leq&C\left\| \frac{\bm{F}_y}{\sqrt{\rho_0}}\right\|_{2}\|\bm{F}\|_\infty+\frac{1}{4\pi}
\left\|\frac{\bm{w}_y}{\sqrt J}\right\|_2\|\sqrt J\bm{F}\|_2+C\left\| \frac{\bm{F}_y}{\sqrt{\rho_0}}\right\|_{2}
\left\|\frac{\bm{h}_y}{\sqrt J}\right\|_2,
\end{aligned}	
\end{equation}
where Lemma \ref{leme0} and Lemma \ref{lj} were used.
Recalling that $J\geq\underline{ J}$ and $\rho_0\leq\bar\rho$, it follows that
\begin{equation}\label{fin}
	\|\bm{F}\|_{\infty}^2\leq \int \left|\partial_y|\bm{F}|^2\right|\mathrm{d}y \leq 2\|\bm{F}\|_{2}\|\bm{F}_y\|_{2}\leq C\|\sqrt{J}\bm{F}\|_{2}\left\| \frac{\bm{F}_y}{\sqrt{\rho_0}}\right\|_{2}.
\end{equation}
Plugging the above estimate into $(\ref{fes})$, one deduces by the Young inequality that
\begin{equation*}\label{3.23}
\begin{aligned}
 &\frac{1}{2}\frac{d}{dt}\|\sqrt{J}\bm{F}\|_{2}^2+\mu \left\| \frac{\bm{F}_y}{\sqrt{\rho_0}}\right\|_{2}^2\\
\leq& \frac{\mu}{2}\left\| \frac{\bm{F}_y}{\sqrt{\rho_0}}\right\|_{2}^2 +C\left(\left\|\frac{\bm{w}_y}{\sqrt{J}}\right\|_{2}^2+\|\sqrt{J}\bm{F}\|_{2}^2+\left\|\frac{\bm{h}_y}{\sqrt J}\right\|_2^2\right)
\end{aligned}
\end{equation*}
from which, by the Gr\"onwall inequality, Lemma \ref{lemom1}, and Lemma \ref{lemh}, one gets
$$
\sup_{0\leq t \leq T}\|\sqrt{J}\bm{F}\|_{2}^2+ \int_{0}^{T}
\left\|\frac{\bm{F}_y}{\sqrt{\rho_0}}\right\|_{2}^2 \mathrm{d}t\leq C.
$$
The estimate for $\int_{0}^{T} \|\bm{F}\|_{\infty}^4 \mathrm{d}t$ follows from the above inequality by using (\ref{fin}).
\end{proof}

As a straightforward consequence of Lemma \ref{leme0} and Lemma \ref{lemtr}, one has:

\begin{corollary}
  \label{cor3.1}
It holds that
$$
\sup_{0\leq t\leq T}\left\|\frac{\bm{w}_y}{\sqrt{J}}\right\|_2\leq C.
$$
\end{corollary}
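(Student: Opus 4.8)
The plan is to read the estimate directly off the definition of the transverse effective viscous flux, with no differential inequality or Gr\"onwall argument needed. Recalling (\ref{Fdef}), namely $\bm{F}=\mu\frac{\bm{w}_y}{J}+\frac{\bm{h}}{4\pi}$, I would first solve algebraically for the velocity gradient,
$$
\frac{\bm{w}_y}{J}=\frac{1}{\mu}\left(\bm{F}-\frac{\bm{h}}{4\pi}\right),
$$
and then multiply through by $\sqrt{J}$ so that the quantity of interest is expressed purely in terms of the two objects that have already been controlled:
$$
\frac{\bm{w}_y}{\sqrt{J}}=\frac{1}{\mu}\left(\sqrt{J}\bm{F}-\frac{\sqrt{J}\bm{h}}{4\pi}\right).
$$

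Taking the $L^2$ norm and applying the triangle inequality then gives
$$
\left\|\frac{\bm{w}_y}{\sqrt{J}}\right\|_2\leq\frac{1}{\mu}\left(\|\sqrt{J}\bm{F}\|_2+\frac{1}{4\pi}\|\sqrt{J}\bm{h}\|_2\right).
$$
The first term on the right is bounded uniformly for $t\in[0,T]$ by Lemma \ref{lemtr}. For the second term, I would extract the weighted bound $\|\sqrt{J}\bm{h}\|_2^2=\int J|\bm{h}|^2\,\mathrm{d}y\leq 8\pi E_0$ directly from the conserved energy identity in Lemma \ref{leme0}. Combining these and taking the supremum over $t\in[0,T]$ yields the claim.

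There is no genuine obstacle here: the corollary is simply a matter of rearranging the definition of $\bm{F}$ so that $\frac{\bm{w}_y}{\sqrt{J}}$ appears as a linear combination of $\sqrt{J}\bm{F}$ and $\sqrt{J}\bm{h}$, each of which is already estimated in $L^\infty(0,T;L^2)$. The only point worth flagging is that one must work with the Jacobian-weighted field $\sqrt{J}\bm{h}$ rather than $\bm{h}$ itself, since it is precisely $\|\sqrt{J}\bm{h}\|_2$ (and not $\|\bm{h}\|_2$) that the basic energy of Lemma \ref{leme0} controls; the factor $\sqrt{J}$ produced by rescaling the definition of $\bm{F}$ by $J$ is exactly what makes this compatible.
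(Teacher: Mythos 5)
Your proposal is correct and is exactly the paper's (implicit) argument: the corollary is stated there as ``a straightforward consequence of Lemma \ref{leme0} and Lemma \ref{lemtr}'', i.e.\ writing $\frac{\bm{w}_y}{\sqrt{J}}=\frac{1}{\mu}\left(\sqrt{J}\bm{F}-\frac{\sqrt{J}\bm{h}}{4\pi}\right)$ and bounding $\|\sqrt{J}\bm{F}\|_2$ by Lemma \ref{lemtr} and $\|\sqrt{J}\bm{h}\|_2^2\leq 8\pi E_0$ by the energy identity of Lemma \ref{leme0}. Nothing further is needed.
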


The following lemma gives the higher integrability of $\bm{h}$.
\begin{lemma}\label{lemb4}
It holds that
\begin{equation*}
\sup_{0\leq t\leq T}\|\sqrt J|\bm{h}|^2\|_2^2 +\int_0^T\int J\left(|\bm{h}|^4+P|\bm{h}|^4+|\bm{h}|^6\right)\mathrm{d}y\mathrm{d}s
+\int_0^T \left\|\frac{|\bm{h}| \bm{h}_y }{\sqrt{J}}\right\|^2_{2} \mathrm{d}s
\leq C.
\end{equation*}
\end{lemma}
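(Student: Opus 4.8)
The plan is to reproduce the scheme of Lemma \ref{lemh} one power higher, deriving a transport--reaction identity for $J|\bm h|^4$ and solving it as an ODE in $t$. Dotting $(\ref{mhd})_4$ with $4J|\bm h|^2\bm h$ and using $J_t=u_y$ together with $\partial_t(J|\bm h|^4)=u_y|\bm h|^4+4J|\bm h|^2\bm h\cdot\bm h_t$, I get
\begin{equation*}
(J|\bm h|^4)_t+3u_y|\bm h|^4=4|\bm h|^2\bm h\cdot\bm w_y+4\nu|\bm h|^2\bm h\cdot\left(\frac{\bm h_y}{J}\right)_y.
\end{equation*}
Writing $u_y=J\cdot\frac{u_y}{J}$ and inserting the flux identity $(\ref{h2})$ converts the convective term into $\frac{3}{\lambda}\left(\partial_t\int_{-\infty}^y\rho_0 u\,\mathrm{d}x+P+\frac{|\bm h|^2}{8\pi}\right)J|\bm h|^4$; this is exactly the quartic analogue of $(\ref{h3})$, with $\frac1\lambda$ replaced by $\frac3\lambda$. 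Solving the resulting ODE with $A$ as in $(\ref{A})$ gives the counterpart of $(\ref{h4})$,
\begin{equation*}
J|\bm h|^4=e^{3\int_{-\infty}^y(A(z,t)-A(z,0))\mathrm{d}z}J_0|\bm h_0|^4+\int_0^t e^{3\int_{-\infty}^y(A(z,t)-A(z,s))\mathrm{d}z}\mathcal{R}\,\mathrm{d}s,
\end{equation*}
with $\mathcal{R}:=4|\bm h|^2\bm h\cdot\bm w_y+4\nu|\bm h|^2\bm h\cdot\left(\frac{\bm h_y}{J}\right)_y-\frac3\lambda\left(P+\frac{|\bm h|^2}{8\pi}\right)J|\bm h|^4$. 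As in $(\ref{ESTA})$, all the exponential weights are bounded above and below by positive constants depending only on $\|\rho_0\|_1$ and $E_0$.

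Next I would integrate over $\mathbb R$ and handle the four contributions. The initial term is bounded by $\int|\bm h_0|^4\leq\|\bm h_0\|_\infty^2\|\bm h_0\|_2^2\leq C$ since $\bm h_0\in H^1(\mathbb R)\hookrightarrow L^\infty\cap L^2$ and $J_0\equiv1$. The reaction term $-\frac3\lambda\left(P+\frac{|\bm h|^2}{8\pi}\right)J|\bm h|^4$ has a good sign and, the weight being bounded below, yields after transposition the desired $\int_0^t\int\left(PJ|\bm h|^4+\frac1{8\pi}J|\bm h|^6\right)\mathrm{d}y\,\mathrm{d}s$. For the diffusion term I would integrate by parts in $y$; since $\partial_y(|\bm h|^2\bm h)\cdot\bm h_y=2(\bm h\cdot\bm h_y)^2+|\bm h|^2|\bm h_y|^2\geq|\bm h|^2|\bm h_y|^2$ and both the weight and $J$ are bounded below, this produces the dissipation $-c_0\int_0^t\left\|\frac{|\bm h|\bm h_y}{\sqrt J}\right\|_2^2\mathrm{d}s$, plus a cross term from differentiating the weight, proportional to $\int_0^t\int\rho_0(u(t)-u(s))|\bm h|^2\bm h\cdot\frac{\bm h_y}{J}\,\mathrm{d}y\,\mathrm{d}s$.

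The decisive and most delicate step --- where I expect the main obstacle --- is the control of the two cubic-in-$\bm h$ terms, which must be dominated by the dissipation just produced. For the cross term I would split off the factor $\frac{|\bm h|\bm h_y}{\sqrt J}$ and bound the remainder using $\rho_0\leq\bar\rho$, $J\geq\underline{J}$ and Lemma \ref{leme0}, reaching $\leq C\int_0^t\|\bm h\|_\infty^2\left\|\frac{|\bm h|\bm h_y}{\sqrt J}\right\|_2\mathrm{d}s$; Young's inequality gives $\epsilon\int_0^t\left\|\frac{|\bm h|\bm h_y}{\sqrt J}\right\|_2^2+C_\epsilon\int_0^t\|\bm h\|_\infty^4$, and the last integral is finite because $(\ref{binf})$ and Lemma \ref{lemh} yield $\int_0^T\|\bm h\|_\infty^4\,\mathrm{d}t\leq C\left(\sup_{t}\|\sqrt J\bm h\|_2^2\right)\int_0^T\left\|\frac{\bm h_y}{\sqrt J}\right\|_2^2\mathrm{d}t\leq C$. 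For the $\bm w_y$ term I would estimate $\int|\bm h|^3|\bm w_y|\leq\left(\int J|\bm h|^6\right)^{1/2}\left\|\frac{\bm w_y}{\sqrt J}\right\|_2$ and apply Young to extract an $\epsilon\int_0^t\int J|\bm h|^6$ piece together with $C\int_0^t\left\|\frac{\bm w_y}{\sqrt J}\right\|_2^2\mathrm{d}s\leq C$ by Lemma \ref{lemom1}. Choosing $\epsilon$ small enough to absorb both the $\left\|\frac{|\bm h|\bm h_y}{\sqrt J}\right\|_2^2$ and the $\int J|\bm h|^6$ pieces into the dissipation from the diffusion and reaction terms closes all the estimates, and --- notably --- no Gr\"onwall argument is needed. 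Finally, the remaining bound $\int_0^T\int J|\bm h|^4\leq C$ follows at once from $\sup_t\int J|\bm h|^4\leq C$. The heart of the matter is that the highest-order nonlinearities $\int\int J|\bm h|^6$ and $\int\left\|\frac{|\bm h|\bm h_y}{\sqrt J}\right\|_2^2$ are genuinely subordinate to the available dissipation, which rests on the new $L^4_tL^\infty_x$ control of $\bm h$ provided by Lemma \ref{lemh}.
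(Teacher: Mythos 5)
Your proposal is correct, and its skeleton (multiplying $(\ref{mhd})_4$ by $J|\bm h|^2\bm h$, substituting $(\ref{h2})$ for $\frac{u_y}{J}$, solving the resulting ODE for $J|\bm h|^4$ with the exponential weight built from $A$, integrating by parts in the diffusion term, and absorbing via Young) is exactly the paper's. The genuine difference is your treatment of the coupling term $|\bm h|^2\bm h\cdot\bm w_y$. The paper substitutes $\bm w_y=\frac{J}{\mu}\left(\bm F-\frac{\bm h}{4\pi}\right)$, which does two things at once: the $-\frac{\bm h}{4\pi}$ part produces an extra damping term $\frac{1}{4\pi\mu}J|\bm h|^4$ on the left (this is how the paper gets the $\int_0^T\int J|\bm h|^4$ piece of the conclusion), while the $\bm F$ part is controlled by $\int_0^t\|\sqrt J\bm F\|_2\,\|\sqrt J|\bm h|^3\|_2\,\mathrm{d}s$ using the bound $\sup_t\|\sqrt J\bm F\|_2\leq C$ from Lemma \ref{lemtr}, with the sextic piece absorbed just as in your argument. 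You instead keep $\bm w_y$ as is, estimate $\int|\bm h|^3|\bm w_y|\,\mathrm{d}y\leq\|\sqrt J|\bm h|^3\|_2\left\|\frac{\bm w_y}{\sqrt J}\right\|_2$, absorb the $J|\bm h|^6$ piece into the reaction dissipation, and invoke only Lemma \ref{lemom1}; the $\int_0^T\int J|\bm h|^4$ bound is then recovered trivially as $T\cdot\sup_t\int J|\bm h|^4$, which is legitimate here since all constants are allowed to depend on $T$. What your route buys is independence from Lemma \ref{lemtr}: your version of Lemma \ref{lemb4} could be proved immediately after Lemmas \ref{lemom1} and \ref{lemh}, whereas the paper's proof must follow the $\bm F$-estimate. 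What the paper's route buys is a structurally cleaner damping mechanism (the $J|\bm h|^4$ dissipation arises intrinsically rather than through multiplication by $T$), in line with its systematic exploitation of the transverse effective viscous flux. One further minor point in your favor: your initial-data weight $e^{3\int_{-\infty}^y(A(z,t)-A(z,0))\mathrm{d}z}$ is the precise integrating-factor expression, where the paper's $(\ref{b3})$ writes only $e^{3\int_{-\infty}^yA(z,t)\mathrm{d}z}$; both are harmless since $(\ref{ESTA})$ bounds either weight, but yours is the more careful bookkeeping.
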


\begin{proof}
Multiplying $(\ref{mhd})_4$ with $J|\bm{h}|^2\bm{h}$ yields
\begin{equation}\label{b11}
\frac{1}{4}\left(J|\bm{h}|^4\right)_t+\frac{3}{4}u_y|\bm{h}|^4-\bm{w}_y\cdot\bm{h} |\bm{h}|^2=\nu\left(\frac{\bm{h}_y}{J}\right)_y\cdot \bm{h} |\bm{h}|^2.	
\end{equation}
Recalling the definition of $\bm{F}$ given by (\ref{Fdef}) and using (\ref{h2}),
one can rewrite  $(\ref{b11})$ as
\begin{equation*}\label{b13}
\begin{aligned}
\frac{1}{4}\left(J|\bm{h}|^4\right)_t&+\frac{3}{4\lambda}\left(\partial_t\int_{-\infty}^y(\rho_0 u)\mathrm{d}x\right)J|\bm{h}|^4+\left(\frac{3}{4\lambda}\left(P+\frac{|\bm{h}|^2}{8\pi}\right)+\frac{1}{4\pi\mu}\right)J|\bm{h}|^4\\
=&\frac{J}{\mu}\bm{F}\cdot\bm{h} |\bm{h}|^2+\nu\left(\frac{\bm{h}_y}{J}\right)_y\cdot \bm{h} |\bm{h}|^2.	
\end{aligned}
\end{equation*}
Considering the above as an ODE in $J|\bm h|^4$ and recalling the definition of $A$ given by (\ref{A}), one obtains
\begin{equation}\label{b3}
\begin{aligned}
J|\bm{h}|^4=&e^{3\int_{-\infty}^y A(z,t)\mathrm{d}z}J_0|\bm{h_0}|^4+\frac4\mu\int_0^t e^{3\int_{-\infty}^y \left(A(z,t)-A(z,s)\right)\mathrm{d}z}JF\cdot\bm h|\bm h|^2dx\\
&-\int_0^t e^{3\int_{-\infty}^y \left(A(z,t)-A(z,s)\right)\mathrm{d}z}\left(\frac{3}{\lambda}\left(P+\frac{|\bm{h}|^2}{8\pi}\right)+\frac{1}{\pi\mu}\right)J|\bm{h}|^4ds\\
&+4\nu\int_0^t e^{3\int_{-\infty}^y \left(A(z,t)-A(z,s)\right)\mathrm{d}z}
\left(\frac{\bm{h}_y}{J}\right)_y\cdot \bm{h} |\bm{h}|^2 \mathrm{d}s.
\end{aligned}
\end{equation}
Terms on the right-hand side of (\ref{b3}) are estimated as follows.
Recalling (\ref{ESTA}), it follows from integration by parts, the H\"older inequality, and (\ref{binf}) that
 \begin{equation}\label{b32}
 \begin{aligned}
&\int \int_0^t e^{3\int_{-\infty}^y \left(A(z,t)-A(z,s)\right)\mathrm{d}z}|\bm{h}|^2\bm{h}\cdot\left(\frac{\bm{h}_y}{J}\right)_y\mathrm{d}s\mathrm{d}y\\
=&-\int \int_0^t e^{3\int_{-\infty}^y \left(A(z,t)-A(z,s)\right)\mathrm{d}z}\Bigg(\frac{|\bm{h}|^2|\bm{h}_y|^2+
2|\bm{h}\cdot\bm{h}_y|^2}{J}\\
&+\frac{3}{\lambda}\rho_0(y)\left(u(y,s)-u(y,t)\right)|\bm{h}|^2\bm{h}\cdot \frac{\bm{h}_y}{J}\Bigg)\mathrm{d}s\mathrm{d}y\\
\leq&- e^{-\frac{6\sqrt2}{\lambda}\sqrt{\| \rho_0 \|_1 E_0}}\int_0^t\left(2\left\|\frac{\bm{h}\cdot\bm{h}_y}{\sqrt{J}}\right\|^2_{2}+\left\|\frac{|\bm{h}||\bm{h}_y|}{\sqrt{J}}\right\|^2_{2}\right)\mathrm{d}s\\
&+ \frac3\lambda e^{\frac{6\sqrt2}{\lambda}\sqrt{\| \rho_0 \|_1 E_0}}\sqrt{\frac{\bar\rho}{\underline J}}\int_0^t(\|\sqrt{\rho_0}u\|_2(s)+\|\sqrt{\rho_0}u\|_2(t))\left\|\frac{|\bm{h}||\bm{h}_y|}{\sqrt{J}}\right\|_{2}\|\bm{h}\|^2_\infty \mathrm{d}s\\
\leq&- e^{-\frac{6\sqrt2}{\lambda}\sqrt{\| \rho_0 \|_1 E_0}}\int_0^t\left\|\frac{|\bm{h}||\bm{h}_y|}{\sqrt{J}}\right\|^2_{2} \mathrm{d}s
+C \int_0^t\left\|\frac{|\bm{h}||\bm{h}_y|}{\sqrt{J}}\right\|_{2}\left\|\frac{\bm{h}_y}{\sqrt{J}}\right\|_{2}\|\sqrt J\bm h\|_2 \mathrm{d}s\\
\leq&- e^{-\frac{6\sqrt2}{\lambda}\sqrt{\| \rho_0 \|_1 E_0}}\int_0^t\left\|\frac{|\bm{h}||\bm{h}_y|}{\sqrt{J}}\right\|^2_{2} \mathrm{d}s
+C \int_0^t\left\|\frac{|\bm{h}||\bm{h}_y|}{\sqrt{J}}\right\|_{2}\left\|\frac{\bm{h}_y}{\sqrt{J}}\right\|_{2} \mathrm{d}s,
\end{aligned}
\end{equation}
where Lemma \ref{leme0} and Lemma \ref{lj} were used. Using again (\ref{ESTA}), it follows from the H\"older inequality and Lemma \ref{lemtr} that
 \begin{equation}\label{b31}
 \begin{aligned}
&\left|\int_0^t \int e^{3\int_{-\infty}^y \left(A(x,t)-A(x,s)\right)\mathrm{d}x}{J\bm{F}}\cdot\bm{h}|\bm{h}|^2 \mathrm{d}y\mathrm{d}s\right|\\
\leq &e^{\frac{6\sqrt2}{\lambda}\sqrt{\| \rho_0 \|_1 E_0}}\int_0^t\|\sqrt{J}\bm{F}\|_{2}\|{\sqrt{J}}|\bm{h}|^3\|_2\mathrm{d}s
\leq \epsilon\int_0^t\|{\sqrt{J}}|\bm{h}|^3\|_2^2\mathrm{d}s+ C_\epsilon,
\end{aligned}
\end{equation}
for any positive $\epsilon$. Finally, integrating $(\ref{b3})$ over $\mathbb{R}$, combining $(\ref{b32})$--$(\ref{b31})$, and choosing $\epsilon$ sufficiently small, it follows that
\begin{equation*}\label{b4}
\begin{aligned}
\left(\int J|\bm{h}|^4 \mathrm{d}y\right)(t)&+\int_0^t\int J\left(|\bm{h}|^4+P|\bm{h}|^4+|\bm{h}|^6\right)\mathrm{d}y\mathrm{d}s+\int_0^t \left\|\frac{|\bm{h}||\bm{h}_y|}{\sqrt{J}}\right\|^2_{2} \mathrm{d}s\\
\leq& C\left(\int J_0|\bm{h_0}|^4\mathrm dy+1+\int^t_0\left\|\frac{\bm{h}_y}{\sqrt{J}}\right\|_{2}^2\right),	
\end{aligned}
\end{equation*}
from which,  by Lemma \ref{lemh} and Lemma \ref{lemtr}, the conclusion follows.
\end{proof}

The following lemma gives the estimates on the derivative of the transverse magnetic field, pressure
and effective viscous flux
\begin{equation}\label{Gdef}
 G:=\lambda \frac{u_y}{J}-P-\frac{\bm{|h|^2}}{8\pi}.
\end{equation}

\begin{lemma}\label{lemg}
It holds that
$$
\sup_{0\leq t \leq T}\left(\left\|\frac{\bm{h}_y}{\sqrt{J}}\right\|^2_2+\|\sqrt JG\|_{2}^2+\|P\|_{\infty}\right)+\int_{0}^{T}\left(\left\|\frac{G_y}{\sqrt{\rho_0}}\right\|_{2}^2
+\left\|\frac{1}{\sqrt{J}}\left(\frac{\bm{h}_y}{J}\right)_{y}\right\|_2^2\right)\mathrm{d}t
\leq  C
$$
and
$$
 \int_{0}^{T}\left(\|G\|_{\infty}^4+\left\|\frac{\bm{h}_y}{{J}}\right\|^4_\infty\right)\mathrm{d}t\leq C.
$$
\end{lemma}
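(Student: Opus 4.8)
The plan is to establish the first displayed estimate by a \emph{coupled} energy argument for $\frac{\bm{h}_y}{\sqrt J}$ and $\sqrt J G$, then to read off the $L^4_t$ bounds on $\|G\|_\infty$ and $\|\frac{\bm{h}_y}{J}\|_\infty$ by interpolation, and to obtain $\sup_t\|P\|_\infty$ from the pressure equation written along characteristics. Throughout, the device that makes every term tractable is the effective viscous flux identity: by (\ref{Gdef}) one has $\frac{u_y}{J}=\frac1\lambda\big(G+P+\frac{|\bm{h}|^2}{8\pi}\big)$, which lets me eliminate every bare factor $u_y$ (uncontrollable near vacuum) in favour of $G$, $P$ and $|\bm{h}|^2$.

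First I would test the magnetic field equation $(\ref{mhd})_4$ with $\big(\frac{\bm{h}_y}{J}\big)_y$. Using $J_t=u_y$ to convert $\int\bm{h}_t\cdot\big(\frac{\bm{h}_y}{J}\big)_y$ into $-\frac12\frac{d}{dt}\|\frac{\bm{h}_y}{\sqrt J}\|_2^2$ plus a lower-order piece, this produces $\frac12\frac{d}{dt}\|\frac{\bm{h}_y}{\sqrt J}\|_2^2+\nu\|\frac1{\sqrt J}\big(\frac{\bm{h}_y}{J}\big)_y\|_2^2$ on the left, and on the right the terms $\int\frac{u_y}{J}\,\frac{|\bm{h}_y|^2}{J}$, $\int\frac{u_y}{J}\bm{h}\cdot\big(\frac{\bm{h}_y}{J}\big)_y$, and $\int\frac{\bm{w}_y}{\sqrt J}\cdot\frac1{\sqrt J}\big(\frac{\bm{h}_y}{J}\big)_y$. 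After substituting the flux identity and writing every second-order quantity as $\sqrt J\cdot\frac1{\sqrt J}\big(\frac{\bm{h}_y}{J}\big)_y$, each term takes the form (coefficient)$\times\|\frac1{\sqrt J}\big(\frac{\bm{h}_y}{J}\big)_y\|_2$ with the coefficient controlled by Corollary \ref{cor3.1}, Lemma \ref{lemh}, and the sup-norms $\|G\|_\infty,\|P\|_\infty,\|\bm{h}\|_\infty$, so Young's inequality absorbs the dissipation. In parallel I would test the $G$ equation with $JG$, producing $\frac12\frac{d}{dt}\|\sqrt J G\|_2^2+\lambda\|\frac{G_y}{\sqrt{\rho_0}}\|_2^2$ against terms treated the same way, the only genuinely new one being the highest-order coupling $-\frac{\nu}{4\pi}\int G\bm{h}\cdot\big(\frac{\bm{h}_y}{J}\big)_y$.

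Adding the two identities, the interpolation inequalities $\|G\|_\infty^2\le C\|\sqrt J G\|_2\|\frac{G_y}{\sqrt{\rho_0}}\|_2$ and $\|\frac{\bm{h}_y}{J}\|_\infty^2\le C\|\frac{\bm{h}_y}{\sqrt J}\|_2\|\frac1{\sqrt J}\big(\frac{\bm{h}_y}{J}\big)_y\|_2$ (both following from $\|f\|_\infty^2\le2\|f\|_2\|f_y\|_2$ with the $\sqrt J$ weights matched, so that no upper bound on $J$ is needed) let me absorb all $\|G\|_\infty$ and $\|\frac{\bm{h}_y}{J}\|_\infty$ factors into the two dissipation terms, reducing the right-hand side to $C\big(1+\|P\|_\infty+\|\bm{h}\|_\infty^2\big)\big(\|\frac{\bm{h}_y}{\sqrt J}\|_2^2+\|\sqrt J G\|_2^2\big)$ plus terms integrable in $t$ by Lemmas \ref{lemom1}--\ref{lemb4} and Corollary \ref{cor3.1}. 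To close this I still need $\|P\|_\infty$: rewriting $(\ref{mhd})_5$ via the flux identity gives the pointwise relation $P_t+\frac1\lambda P^2=\frac1\lambda\big[(\gamma-1)B^2+(\gamma-2)BP\big]+(\gamma-1)\big(\mu|\frac{\bm{w}_y}{J}|^2+\frac{\nu}{4\pi}|\frac{\bm{h}_y}{J}|^2\big)$ with $B=G+\frac{|\bm{h}|^2}{8\pi}$, whose quadratic damping $\frac1\lambda P^2$ lets me bound $\|P\|_\infty(t)$ by $\|P_0\|_\infty+C\int_0^t\big(\|G\|_\infty^2+\|\bm{h}\|_\infty^4+\|\frac{\bm{w}_y}{J}\|_\infty^2+\|\frac{\bm{h}_y}{J}\|_\infty^2\big)\,ds$; here $\|P_0\|_\infty$ is finite since $P_0\in L^1$ with $P_0'\in L^2$, $\|\bm{h}\|_\infty^2$ is bounded by Lemma \ref{lemh} and the first estimate, and $\|\frac{\bm{w}_y}{J}\|_\infty\le C(\|\bm{F}\|_\infty+\|\bm{h}\|_\infty)$ is integrable by (\ref{Fdef}) and Lemma \ref{lemtr}.

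The main obstacle is precisely this circular coupling: the Gr\"onwall coefficient for $\|\frac{\bm{h}_y}{\sqrt J}\|_2^2+\|\sqrt J G\|_2^2$ contains $\|P\|_\infty$, while the bound for $\|P\|_\infty$ feeds on $\int_0^t\|G\|_\infty^2$ and $\int_0^t\|\frac{\bm{h}_y}{J}\|_\infty^2$, i.e.\ on the very dissipation integrals I am trying to control. I would break it by treating the two estimates as one coupled system: the same interpolation bounds turn $\int_0^t\big(\|G\|_\infty^2+\|\frac{\bm{h}_y}{J}\|_\infty^2\big)$ into a small multiple of $\int_0^t D$ plus $C\int_0^t\Phi$, where $D$ denotes the dissipation and $\Phi$ the energy; absorbing the small multiple of $\int_0^t D$ on the left makes $\|P\|_\infty$ bounded on $[0,T]$ in terms of $\int_0^T\Phi$ and a finite constant, after which the coefficient $C(1+\|P\|_\infty+\|\bm{h}\|_\infty^2)$ lies in $L^\infty_t\subset L^1_t$ and Gr\"onwall closes, yielding the first estimate. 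The second estimate is then immediate: squaring the two interpolation inequalities and integrating in time gives $\int_0^T\big(\|G\|_\infty^4+\|\frac{\bm{h}_y}{J}\|_\infty^4\big)\le C\sup_t\big(\|\sqrt J G\|_2^2+\|\frac{\bm{h}_y}{\sqrt J}\|_2^2\big)\int_0^T D\le C$.
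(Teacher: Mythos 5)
Your overall architecture is the same as the paper's: the two coupled energy identities for $\left\|\frac{\bm{h}_y}{\sqrt J}\right\|_2^2$ and $\|\sqrt J G\|_2^2$ (testing $(\ref{mhd})_4$ with $\left(\frac{\bm{h}_y}{J}\right)_y$ is, after one integration by parts, the paper's device of differentiating in $y$ and testing with $\frac{\bm{h}_y}{J}$), elimination of every bare $u_y$ via the flux identity, the same interpolation inequalities, and the Riccati structure of the pressure equation. The genuine gap is in your closure of the $P$--energy circularity. In your scheme $\|P\|_\infty$ enters the energy inequality as a multiplicative Gr\"onwall coefficient, $\frac{d}{dt}\Phi+D\le C(1+\|P\|_\infty+\|\bm h\|_\infty^2)\Phi+\dots$, because you estimate the flux-substituted term $\frac1\lambda\int P\frac{|\bm{h}_y|^2}{J}\,\mathrm{d}y$ (and the $P$-parts of the cubic terms) by $\|P\|_\infty\Phi$. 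Gr\"onwall then yields $\Phi(t)+\int_0^t D\,\mathrm{d}s\le C\exp\left(Ct\sup_{s\le t}\|P\|_\infty\right)$, exponential in the unknown $M:=\sup_{s\le t}\|P\|_\infty$; feeding in your pressure bound $M\le C+\epsilon\int_0^t D\,\mathrm{d}s+C_\epsilon\int_0^t\Phi\,\mathrm{d}s$ produces a self-referential inequality of the form $N\le Ce^{CN}$, which implies no bound on $N$. Your ``absorb the small multiple of $\int_0^t D$'' step also fails when carried out: substituting the pressure bound into $C\int_0^t\|P\|_\infty\Phi\,\mathrm{d}s$ produces the term $C\epsilon\left(\int_0^t D\,\mathrm{d}s\right)\left(\int_0^t\Phi\,\mathrm{d}s\right)$, i.e.\ the dissipation integral multiplied by the unknown quantity $\int_0^t\Phi\,\mathrm{d}s$ rather than by a small constant, so it cannot be absorbed into the left-hand side without already knowing $\int_0^t\Phi\,\mathrm{d}s\le C$ --- which is what is being proved.

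The paper closes the loop by arranging that $\|P\|_\infty$ never multiplies the energy. Three ingredients: (i) after the flux substitution, the commutator term $\frac{1}{2\lambda}\int\left(P+\frac{|\bm{h}|^2}{8\pi}\right)\frac{|\bm{h}_y|^2}{J}\,\mathrm{d}y$ is \emph{nonnegative} and is kept on the left of (\ref{by3}), so only the signless $G$-part is estimated; (ii) in the cubic terms such as $\int JP^2|\bm{h}|^2\,\mathrm{d}y$ and $\int JP|\bm{h}|^2|G|\,\mathrm{d}y$, one factor of $P$ is controlled through $\|JP\|_1\le C$ from the basic energy identity (Lemma \ref{leme0}), not through $\|P\|_\infty$; consequently (iii) $\|P\|_\infty$ survives in (\ref{hyg}) only in the single term $C\left\|\frac{\bm{h}_y}{\sqrt J}\right\|_2\|P\|_\infty$, whose coefficient already lies in $L^2(0,T)$ by Lemma \ref{lemh}. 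Moreover the Gr\"onwall coefficient is $C\left(1+\left\|\frac{\bm{h}_y}{\sqrt J}\right\|_2^2\right)\in L^1(0,T)$, again by Lemma \ref{lemh}, independent of $P$. This gives the \emph{linear} relation $\varphi(t)\le C\left(1+\sup_{s\le t}\|P\|_\infty\right)$, while the pressure estimate gives the square-root relation $\sup_{s\le t}\|P\|_\infty\le C+C\sqrt{\varphi}\left[\left(\int_0^t\varphi\,\mathrm{d}s\right)^{1/2}+1\right]$; combining the two, Young's inequality absorbs $\sqrt\varphi$ and the ordinary Gr\"onwall inequality closes. If you redo your term-by-term estimates respecting (i)--(iii), your argument becomes the paper's; as written, the closing step does not go through.
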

\begin{proof}
Differentiating $(\ref{mhd})_4$ with respect to $y$, it follows that
\begin{equation}\label{by1}
\left(\bm{h}\right)_{yt}+\left(\frac{u_y}{J}\bm{h}\right)_{y}-\left(\frac{\bm{w}_y}{J}\right)_{y}=\left(\frac{\nu}{J}\left(\frac{\bm{h}_y}{J}\right)_{y}\right)_{y}.
\end{equation}
Multiplying $(\ref{by1})$ with $\frac{\bm{h}_y}{J}$, recalling the definitions of  $\bm{F}$ and $G$ given by (\ref{Fdef}) and (\ref{Gdef}), respectively, and integrating the resultant over $\mathbb{R}$ , it follows that
\begin{equation}\label{by2}
\begin{aligned}
&\frac{1}{2}\frac{d}{dt}\int_{}\frac{|\bm{h}_y|^2}{J}\mathrm{d}y+\frac{1}{2\lambda}\int_{}\left(G+P+\frac{|\bm{h}|^2}{8\pi} \right)\frac{|\bm{h}_y|^2}{J}\mathrm{d}y\\
=&\frac{1}{\mu}\int_{}\left(\bm{F}-\frac{\bm{h}}{4\pi} \right)_y\frac{\bm{h}_y}{J}\mathrm{d}y+\int_{}\frac{u_y}{J}\bm{h}\cdot\left(\frac{\bm{h}_y}{J}\right)_{y}\mathrm{d}y-\nu\int\frac{1}{J}\left|\left(\frac{\bm{h}_y}{J}\right)_{y}\right|^2\mathrm{d}y.
\end{aligned}
\end{equation}
Then, one could use the Cauchy inequality to reorganize  $(\ref{by2})$ as
\begin{align}\label{by3}
\frac{1}{2}\frac{d}{dt}\int_{}\frac{|\bm{h}_y|^2}{J}\mathrm{d}y&+\frac{\nu}{2}\int\frac{1}{J}\left|\left(\frac{\bm{h}_y}{J}\right)_{y}\right|^2\mathrm{d}y+\frac{1}{2\lambda}\int_{}\left(P+\frac{|\bm{h}|^2}{8\pi} \right)\frac{|\bm{h}_y|^2}{J}\mathrm{d}y+\frac{1}{4\pi\mu}\int_{}\frac{|\bm{h}_y|^2}{J}\mathrm{d}y\nonumber\\
\leq&\frac{1}{2\lambda}\int_{}|G|\frac{|\bm{h}_y|^2}{J}\mathrm{d}y+\frac{1}{\mu}\int_{}\left|\frac{\bm{F}_y}{\sqrt J}\right|\left|\frac{\bm{h}_y}{\sqrt J}\right|\mathrm{d}y+C\int_{}J|\bm{h}|^2\left|\frac{u_y}{J}\right|^2\mathrm{d}y.
\end{align}
Recalling that $\rho_0\leq\bar\rho$ and $J\geq\underline J$ (guaranteed by Lemma \ref{lj}), one deduces that
\begin{equation}\label{ginf}
	\|G\|_{\infty}^2\leq\int|\partial_y|G|^2|dy\leq 2\|G\|_{2}\|G_y\|_{2}\leq 2\sqrt{\frac{\bar\rho}{\underline J}}\|\sqrt{J}G\|_{2}\left\|\frac{G_y}{\sqrt{\rho_0}}\right\|_{2},
\end{equation}
and thus, by the Young inequality, that
\begin{equation}\label{byr1}
\begin{aligned}
\int_{}|G|\frac{|\bm{h}_y|^2}{J}\mathrm{d}y\leq& \left\|\frac{\bm{h}_y}{\sqrt{J}}\right\|^2_2 \|{G}\|_{\infty}\leq C\left\|\frac{\bm{h}_y}{\sqrt{J}}\right\|^2_2 \|\sqrt{J}{G}\|_{2}^{\frac{1}{2}}\left\|\frac{G_y}{\sqrt{\rho_0}}\right\|_{2}^{\frac{1}{2}}\\
\leq &\epsilon\left\|\frac{G_y}{\sqrt{\rho_0}}\right\|_2^2+C_\epsilon \left(\left\|\frac{\bm{h}_y}{\sqrt{J}}\right\|^2_4 +\|\sqrt{J}{G}\|_{2}^{2}\right),
\end{aligned}
\end{equation}
for any $\epsilon>0$. Next, recalling that $\rho_0\leq\bar\rho$ and $J\geq\underline J$, one gets by the Cauchy inequality that
\begin{equation}\label{byr2}
\int_{}\left|\frac{\bm{F}_y}{\sqrt J}\right|\left|\frac{\bm{h}_y}{\sqrt J}\right|\mathrm{d}y\leq C\left(\left\|\frac{\bm{F}_y}{\sqrt{\rho_0}}\right\|^2_2+\left\|\frac{\bm{h}_y}{\sqrt{J}}\right\|_2^2\right).
\end{equation}
By Lemma \ref{leme0} and using the Sobolev inequality, one deduces that
\begin{equation}\label{byr3}
\begin{aligned}
&\int_{}J|\bm{h}|^2\left|\frac{u_y}{J}\right|^2\mathrm{d}y=\frac{1}{\lambda^2}\int_{}J|\bm{h}|^2\left|G+P+\frac{{|\bm h|^2}}{8\pi}\right|^2\mathrm{d}y\\
\leq &C\int J\left(G^2|\bm{h}|^2+P^2|\bm{h}|^2+|\bm{h}|^6\right)dy\\
\leq& C\left(\|\sqrt{J}\bm{h}\|_2 \|\sqrt{J}{G}\|_{2}\|\bm{h}\|_\infty \|{G}\|_\infty+\|{J}{P}\|_{1}\|\bm{h}\|_\infty^2 \|{P}\|_\infty+\|\sqrt{J}|\bm{h}|^3\|_2^2 \right)\\
\leq&C\left(\left\|\frac{\bm h_y}{\sqrt J}\right\|_2^\frac12\|\sqrt JG\|_2^\frac32\left\|\frac{G_y}{\sqrt{\rho_0}}\right\|_2^\frac12+\left\|
\frac{\bm h_y}{\sqrt J}\right\|_2\|P\|_\infty+\|\sqrt J|\bm h|^3\|_2^2\right)\\
\leq& \epsilon\left\|\frac{G_y}{\sqrt{\rho_0}}\right\|_2^2+C_\epsilon\left\|\frac{\bm{h}_y}{\sqrt{J}}\right\|^{\frac23}_2 \|\sqrt{J}{G}\|_{2}^{2}+C\left(\left\|\frac{\bm{h}_y}{\sqrt{J}}\right\|_2 \|P\|_\infty+\|\sqrt{J}|\bm{h}|^3\|_2^2\right),
\end{aligned}
\end{equation}
for any $\epsilon>0$.

Recalling the definition of $G$, given by (\ref{Gdef}), one can check from (\ref{mhd}) by direct calculations that
\begin{equation}\label{ge}
\begin{aligned}
G_t-\frac{\lambda}{J}\left(\frac{G_y}{\rho_0}\right)_y=&-\gamma\frac{u_y}{J}G+
\frac{2-\gamma}{8\pi}\frac{u_y}{J}|\bm{h}|^2-(\gamma-1)\mu\Big|\frac{\bm{w}_y}{J}\Big|^2-\frac{\bm{h}\cdot\bm{w_y}}{4\pi J}\\
&-\frac{\nu}{4\pi}\left((\gamma-1)\left|\frac{\bm{h}_y}{J}\right|^2+\frac{\bm{h}}{J}\cdot\left(\frac{\bm{h}_y}{J}\right)_{y}\right).
\end{aligned}	
\end{equation}
Multiplying $(\ref{ge})$ with $JG$ and integrating over $\mathbb{R}$, one gets by integration by parts that
\begin{equation}\label{ge1}
\begin{aligned}
&\frac{1}{2}\frac{d}{dt}\|\sqrt JG\|_2^2+\lambda\left\|\frac{G_y}{\sqrt{\rho_0}}\right\|_2^2\\
=&\left(\frac{1}{2}-\gamma\right)\int u_y G^2 \mathrm{d}y+\frac{2-\gamma }{8\pi}\int_{}u_y |\bm{h}|^2G\mathrm{d}y-\mu(\gamma-1 )\int_{}\left|\frac{\bm{w}_y}{\sqrt{J }}\right|^2 G\mathrm{d}y
\\
&-\frac{1}{4\pi}\int_{}\bm{w}_y\cdot \bm{h} G\mathrm{d}y-\frac{\nu}{4\pi}(\gamma-1)\int\left|\frac{\bm{h}_y}{\sqrt{J}}\right|^2G\mathrm{d}y-\frac{\nu}{4\pi}\int{\bm{h}\cdot}\left(\frac{\bm{h}_y}{J}\right)_{y}G\mathrm{d}y.
\end{aligned}
\end{equation}
Terms on the right hand side of $(\ref{ge1})$ are estimated as follows. It follows from integration by parts, the H\"older
and Young inequalities, (\ref{ginf}), and Lemma \ref{leme0} that
\begin{equation}\label{ge2}
\begin{aligned}
\left|\int u_y G^2 \mathrm{d}y\right|
=2\left|\int_{} uG G_y\mathrm{d}y\right| \leq 2\|\sqrt{\rho_0}u\|_2 \left\|\frac{G_y}{\sqrt{\rho_0}}\right\|_2\|{G}\|_{\infty}\\
\leq C\left\|\frac{G_y}{\sqrt{\rho_0}}\right\|_{2}^{\frac{3}{2}}\|\sqrt{J}{G}\|_{2}^{\frac{1}{2}}\leq \epsilon\left\|\frac{G_y}{\sqrt{\rho_0}}\right\|_2^2+C_\epsilon\|\sqrt{J}{G}\|_{2}^2,
\end{aligned}
\end{equation}
for any $\epsilon>0$.
Next, recalling the definition of $G$, by Lemma \ref{leme0}, and using the Young inequality, one deduces
\begin{equation}\label{ge5}
\begin{aligned}
&\left|\int u_y|\bm{h}|^2G \mathrm{d}y\right|
=\frac{1}{\lambda }\left|\int_{} J\Big(G+P+\frac{|\bm{h}|^2}{8\pi} \Big)|\bm{h}|^2G\mathrm{d}y\right|\\
\leq&C\|\bm{h}||_{\infty}^2\left(\|\sqrt J{G}\|_{2}^2+ \|JP\|_1^{}\|G\|_\infty+\|\sqrt J\bm{h}\|_2\|G\|_\infty \right)\\
\leq&C\|\sqrt{J}\bm{h}\|_2 \left\|\frac{\bm{h}_y}{\sqrt J}\right\|_2\left(\|\sqrt J G\|_2^2+ \|\sqrt{J}G\|_2^{\frac12} \left\|\frac{{G}_y}{\sqrt {\rho_0}}\right\|_2^\frac12 \right)\\
\leq&C \left\|\frac{\bm{h}_y}{\sqrt J}\right\|_2\|\sqrt J G\|_2^2+\epsilon\left\|\frac{G_y}{\sqrt{\rho_0}}\right\|_2^2 +C_\epsilon\left\|\frac{\bm{h}_y}{\sqrt J}\right\|_2^\frac{4}{3}\|\sqrt{J}{G}\|_{2}^\frac{2}{3}\\
\leq&\epsilon\left\|\frac{G_y}{\sqrt{\rho_0}}\right\|_2^2 +C_\epsilon\left(\left\|\frac{\bm{h}_y}{\sqrt J}\right\|_2\|\sqrt J G\|_2^2+\left\|\frac{\bm{h}_y}{\sqrt J}\right\|_2^\frac{3}{2}\right),
\end{aligned}
\end{equation}
for any $\epsilon>0$.
By Corollary \ref{cor3.1}, Lemma \ref{leme0}, (\ref{ginf}), and the Young inequality, it follows that
\begin{equation}\label{ge3}
\begin{aligned}
\left|\int \left|\frac{\bm{w}_y}{\sqrt{J }}\right|^2 G \mathrm{d}y\right|+\left|\int \bm{w}_y\cdot \bm{h} G \mathrm{d}y\right|
\leq \left(\left\|\frac{\bm{w}_y}{\sqrt J}\right\|_2^2+\|\sqrt{J}\bm{h}\|_2 \left\|\frac{\bm{w}_y}{\sqrt J}\right\|_2\right)\|G\|_\infty&\\
\leq C\|G\|_\infty\leq   C\left\|\frac{G_y}{\sqrt{\rho_0}}\right\|_2^\frac12\|\sqrt{J}{G}\|_{2}^\frac12
\leq\epsilon\left\|\frac{G_y}{\sqrt{\rho_0}}\right\|_2^2+C_\epsilon(\|\sqrt{J}{G}\|_{2}^2+1),&
\end{aligned}
\end{equation}
for any $\epsilon>0$. Recalling $(\ref{byr1})$, one gets
\begin{equation}\label{ge6}
\begin{aligned}
\left|\int \left|\frac{\bm{h}_y}{\sqrt{J }}\right|^2 G \mathrm{d}y\right|
\leq \epsilon\left\|\frac{G_y}{\sqrt{\rho_0}}\right\|_2^2+C_\epsilon \left(\left\|\frac{\bm{h}_y}{\sqrt{J}}\right\|^4_2 +\|\sqrt{J}{G}\|_{2}^{2}\right),
\end{aligned}
\end{equation}
for any $\epsilon>0$.
Finally, by Lemma \ref{leme0}, (\ref{ginf}), and using the Young inequality, one deduces
\begin{equation}\label{ge7}
\begin{aligned}
\left|\int{\bm{h}\cdot}\left(\frac{\bm{h}_y}{J}\right)_{y}G\mathrm{d}y\right|
\leq& \|\sqrt{J}\bm{h}\|_2\left\|\frac{1}{\sqrt J}\left(\frac{\bm{h}_y}{J}\right)_y\right\|_2\|G\|_\infty\\
\leq& C\left\|\frac{1}{\sqrt J}\left(\frac{\bm{h}_y}{J}\right)_y\right\|_2\left\|\frac{G_y}{\sqrt{\rho_0}}\right\|_2^\frac12\|\sqrt{J}{G}\|_{2}^\frac12\\
\leq&\epsilon\left(\left\|\frac{1}{\sqrt J}\left(\frac{\bm{h}_y}{J}\right)_y\right\|_2^2+\left\|\frac{G_y}{\sqrt{\rho_0}}\right\|_2^2\right)+C_\epsilon\|\sqrt{J}{G}\|_{2}^2,
\end{aligned}
\end{equation}
for any $\epsilon>0$. Substituting (\ref{ge2})--(\ref{ge7}) into (\ref{ge1}), (\ref{byr1})--(\ref{byr3}) into (\ref{by3}), choosing $\epsilon$ sufficiently small, and adding the resultants up lead to
\begin{equation}\label{hyg}
\begin{aligned}
&\frac{d}{dt}\left(\left\|\frac{\bm{h}_y}{\sqrt{J}}\right\|_2^2+\|\sqrt JG\|_2^2\right)+\lambda\left\|\frac{G_y}{\sqrt{\rho_0}}\right\|_2^2+\nu\left\|\frac{1}{\sqrt{J}}\left(\frac{\bm{h}_y}{J}\right)_{y}\right\|_2^2\\
&\quad +\frac1\lambda\int_{}\left(P+\frac{|\bm{h}|^2}{8\pi} \right)\frac{|\bm{h}_y|^2}{J}\mathrm{d}y
+\frac1{2\pi\mu}\int_{}\frac{|\bm{h}_y|^2}{J}\mathrm{d}y\\
\leq&C\left(1+\left\|\frac{\bm{h}_y}{\sqrt{J}}\right\|^2_2\right) \left(\left\|\frac{\bm{h}_y}{\sqrt{J}}\right\|^2_2 +\|\sqrt{J}{G}\|_{2}^{2}\right)+C\left\|\frac{\bm{h}_y}{\sqrt{J}}\right\|_2 \|P\|_\infty\\
&+C\left(\left\|\frac{{\bm{F}_y}}{\sqrt{\rho_0}}\right\|_2^2+\|\sqrt{J}|\bm{h}|^3\|_2^2\right).
\end{aligned}
\end{equation}
In order to close the above estimate, we have to estimate the pressure $P$.

By the definitions of $G$ and $\bm{F}$, one can rewrite $(\ref{mhd})_5$ as
\begin{equation}\label{pe}
\begin{aligned}
 &P_t+\frac{1}{ \lambda}\left( P+\frac{2-\gamma}{2}G+\frac{2-\gamma}{16\pi}|\bm{h}|^2\right)^2\\
= &\frac{\gamma^2}{4\lambda}\left(G+\frac{|\bm{h}|^2}{8\pi}\right)^2+\frac{\gamma-1}{ \mu}\left|\bm{F}-\frac{\bm{h}}{4\pi}\right|^2+\frac{\nu(\gamma-1)}{4\pi}\left|\frac{\bm{h}_y}{{J}}\right|^2.
\end{aligned}
\end{equation}
Integrating the above one over $(0, t)$ yields
$$
P(y,t)\leq P_0(y)+C\int_0^t\left( G^2+|\bm{h}|^4+|\bm{F}|^2 +|\bm{h}|^2+\left|\frac{\bm{h}_y}{{J}}\right|^2\right)(y,\tau)\mathrm{d} \tau,
$$
thus, one has
\begin{equation}\label{pes}
\sup_{0\leq s\leq t}\|P\|_{\infty}\leq\|P_0\|_\infty+C\int_0^t\left(\|G\|_\infty^2+\||\bm{h}|^2\|_\infty^2+\|\bm{F}\|_\infty^2+\|\bm{h}\|^2_\infty
+\left\|\frac{\bm{h}_y}{{J}}\right\|^2_\infty\right)\mathrm{d}s.
\end{equation}
By Lemma \ref{lemb4}, one deduces
\begin{equation}\label{h24}
	\||\bm{h}|^2\|_{\infty}^2\leq2\int\left|\partial_y|\bm{h}|^2\right||\bm{h}|^2dy\leq C\left\|\frac{\bm{h}\cdot\bm{h}_y}{\sqrt{J}}\right\|_{2}\|\sqrt{J}|\bm{h}|^2\|_{2}\leq C\left\|\frac{\bm{h}\cdot\bm{h}_y}{\sqrt{J}}\right\|_{2}.
\end{equation}
By Lemma \ref{lemtr} and recalling (\ref{fin}), one could obtain
\begin{equation}\label{phy}
	\left\|\frac{\bm{h}_y}{{J}}\right\|^2_\infty\leq 2\int \left|\frac{\bm{h}_y}{{J}}\right|\left|\left(\frac{\bm{h}_y}{{J}}\right)_y\right|dy \leq C\left\|\frac{\bm{h}_y}{\sqrt{J}}\right\|_{2}\left\|\frac{1}{\sqrt J} \left(\frac{\bm{h}_y}{{J}}\right)_y\right\|_{2}.
\end{equation}
Plugging (\ref{h24})--(\ref{phy}) into $(\ref{pes})$, using (\ref{binf}), (\ref{ginf}), and (\ref{fin}), it follows from the H\"older and Young inequality and Lemmas \ref{lemh}--\ref{lemb4} that
\begin{equation}\label{pinf}
\begin{aligned}
\sup_{0\leq s\leq t}\|P\|_{\infty}\leq&\|P_0\|_\infty+C\int_0^t\Bigg(\|\sqrt{J}G\|_2 \left\|\frac{{G}_y}{\sqrt {\rho_0}}\right\|_2 +\left\|\frac{\bm{h}\cdot\bm{h}_y}{\sqrt{J}}\right\|_{2}+\|\sqrt J\bm F\|_2\left\|\frac{\bm{F}_y}{\sqrt{\rho_0}}\right\|_{2}\\
&+\left.\|\sqrt J\bm h\|_2\left\|\frac{\bm{h}_y}{\sqrt{J}}\right\|_{2}+\left\|\frac{\bm{h}_y}{\sqrt{J}}\right\|_{2}\left\|\frac{1}{\sqrt J} \left(\frac{\bm{h}_y}{{J}}\right)_y\right\|_{2}\right)\mathrm{d}s\\
\leq&C+C\left(\int_0^t\|\sqrt{J}G\|_2^2\mathrm{d}s\right)^\frac12 \Bigg(\int_0^t\left\|\frac{{G}_y}{\sqrt {\rho_0}}\right\|_2^2\mathrm{d}s\Bigg)^\frac12\\
 &+\left(\int_0^t\left\|\frac{1}{\sqrt J} \left(\frac{\bm{h}_y}{{J}}\right)_y\right\|_{2}^2\mathrm{d}s\right)^\frac12.
\end{aligned}
\end{equation}
Denote
\begin{equation*}
  \begin{aligned}
    \varphi(t):=\left(\|\sqrt J G\|_2^2+\left\|\frac{\bm{h}_y}{\sqrt{J}}\right\|_{2}^2\right)(t)+\int_0^t\left(\lambda\left\|\frac{G_y}{\sqrt{\rho_0}}\right\|_2^2+
    \nu\left\|\frac{1}{\sqrt J} \left(\frac{\bm{h}_y}{{J}}\right)_y\right\|_{2}^2\right)\mathrm{d}s
  \end{aligned}
\end{equation*}
Therefore, one could rewrite $(\ref {hyg})$ as
\begin{equation*}
\begin{aligned}
\varphi'(t)
\leq C\left(1+\left\|\frac{\bm{h}_y}{\sqrt{J}}\right\|^2_2\right) \varphi(t)
+C\left\|\frac{\bm{h}_y}{\sqrt{J}}\right\|_2 \|P\|_\infty+C\left(\left\|\frac{{\bm{F}_y}}{\sqrt{\rho_0}}\right\|_2^2+\|\sqrt{J}|\bm{h}|^3\|_2^2\right),
\end{aligned}
\end{equation*}
from which, by the Gr\"onwall inequality and by Lemma \ref{lemh} and Lemma \ref{lemb4}, one deduces
\begin{equation}\label{hyg1}
  \begin{aligned}
    \varphi(t)
\leq& C\left(\varphi(0)
+\int_0^t\left\|\frac{\bm{h}_y}{\sqrt{J}}\right\|_2 \|P\|_\infty\mathrm{d}s+1\right)\\
\leq& C\left[1+\left(\int_0^t\left\|\frac{\bm h_y}{\sqrt J}\right\|_2^2ds\right)^\frac12\sup_{0\leq s\leq t}\|P\|_\infty\right]\\
\leq&C\left(1+\sup_{0\leq s\leq t}\|P\|_\infty\right).
  \end{aligned}
\end{equation}
Rewrite $(\ref {pinf})$ in terms of $\varphi(t)$ as
\begin{equation}\label{pva}
\begin{aligned}
\sup_{0\leq t \leq T}\|P\|_{\infty}\leq
C+C\sqrt\varphi\left[\left(\int_0^t\varphi(s)\mathrm{d}s\right)^\frac12+1\right].
\end{aligned}
\end{equation}
Plugging $(\ref{pva})$ into $(\ref{hyg1})$ and by the Young inequality, one has
\begin{equation*}\label{pv2}
\begin{aligned}
 \varphi(t)\leq&
C+C\sqrt\varphi\left(\Big(\int_0^t\varphi(s)\mathrm{d}s\Big)^\frac12+1\right)\\
\leq& \frac{\varphi(t)}{2}+C+C\int_0^t\varphi(s)\mathrm{d}s,
\end{aligned}
\end{equation*}
which, by the Gr\"onwall inequality, yields $\varphi(t)\leq C$. Thanks to this and (\ref{pva}), the first conclusion follows.
The second conclusion follows from the first one
by using $(\ref{ginf})$ and $(\ref{phy})$, as well as the H\"older inequality.
\end{proof}

Based on Lemma \ref{lemtr} and Lemma \ref{lemg}, one can get the uniform upper bound of $(J, \bm{h})$ as stated in the following lemma.

\begin{lemma}\label{lemjh}
It holds that
$$
\sup_{0\leq t \leq T}\|J\|_\infty\leq C.
$$
\end{lemma}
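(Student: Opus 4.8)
The plan is to turn the mass equation $(\ref{mhd})_1$ into a pointwise-in-$y$ linear ODE for $\ln J$ in time, whose right-hand side is expressed through the effective viscous flux, and then to integrate it. First I would note that $J_t=u_y$ gives $(\ln J)_t=\frac{u_y}{J}$, while the definition (\ref{Gdef}) of $G$ yields the algebraic identity
\[
\frac{u_y}{J}=\frac1\lambda\left(G+P+\frac{|\bm h|^2}{8\pi}\right).
\]
Combining the two, for each fixed $y$ one obtains $(\ln J)_t=\frac1\lambda\bigl(G+P+\frac{|\bm h|^2}{8\pi}\bigr)$, which, using $J_0\equiv1$, integrates to
\[
J(y,t)=\exp\left(\frac1\lambda\int_0^t\Bigl(G+P+\frac{|\bm h|^2}{8\pi}\Bigr)(y,s)\,\mathrm ds\right).
\]
Since $P\geq0$ and $|\bm h|^2\geq0$, the upper bound on $J$ reduces to controlling $\int_0^t\bigl(\|G\|_\infty+\|P\|_\infty+\|\bm h\|_\infty^2\bigr)\,\mathrm ds$ uniformly in $y$ and $t$ (the favorable sign of the last two terms only helps the lower bound, already given by Lemma \ref{lj}).

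Each of the three integrands has already been controlled in $L^1(0,T)$ by the preceding lemmas. The pressure term is immediate from Lemma \ref{lemg}, which furnishes $\sup_{0\le t\le T}\|P\|_\infty\le C$ and hence $\int_0^T\|P\|_\infty\,\mathrm ds\le CT\le C$. For the flux term I would invoke the bound $\int_0^T\|G\|_\infty^4\,\mathrm dt\le C$ from Lemma \ref{lemg} and apply H\"older in time to get $\int_0^T\|G\|_\infty\,\mathrm dt\le C$. For the magnetic term I would use (\ref{binf}), namely $\|\bm h\|_\infty^2\le2\|\sqrt J\bm h\|_2\bigl\|\frac{\bm h_y}{\sqrt J}\bigr\|_2$, together with Lemma \ref{lemh} (which bounds $\sup_t\|\sqrt J\bm h\|_2$ and $\int_0^T\bigl\|\frac{\bm h_y}{\sqrt J}\bigr\|_2^2\,\mathrm dt$) and Cauchy--Schwarz in time to obtain $\int_0^T\|\bm h\|_\infty^2\,\mathrm dt\le C$.

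Assembling the three bounds gives $\int_0^t\bigl(G+P+\frac{|\bm h|^2}{8\pi}\bigr)\,\mathrm ds\le C$ uniformly in $(y,t)$, whence $J(y,t)\le e^{C/\lambda}=C$, which is the claim.

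I do not anticipate a serious obstacle: once the uniform $L^1(0,T)$ control of the exponent is in hand from Lemmas \ref{lemtr}--\ref{lemg}, the remainder is bookkeeping. The only point requiring care is that the representation of $\frac{u_y}{J}$ should be used in its \emph{pointwise} algebraic form through $G$, rather than in the time-integrated form (\ref{h2}); this keeps the ODE for $\ln J$ genuinely pointwise in $y$ and avoids having to interchange the $t$-derivative with the spatial integral $\int_{-\infty}^y\rho_0u\,\mathrm dx$.
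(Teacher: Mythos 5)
Your proof is correct and follows essentially the same route as the paper: both solve the ODE $J_t=\frac{J}{\lambda}\bigl(G+P+\frac{|\bm h|^2}{8\pi}\bigr)$ arising from the pointwise identity $\lambda\frac{u_y}{J}=G+P+\frac{|\bm h|^2}{8\pi}$ and bound the exponent via Lemma \ref{lemg} ($\sup_t\|P\|_\infty$ and $\int_0^T\|G\|_\infty^4\,\mathrm dt$) together with the interpolation (\ref{binf}) for $\|\bm h\|_\infty$. The only cosmetic difference is that the paper bounds $\|\bm h\|_\infty$ uniformly in time using $\sup_t\bigl\|\frac{\bm h_y}{\sqrt J}\bigr\|_2\le C$ from Lemma \ref{lemg}, whereas you use the time-integrated bound from Lemma \ref{lemh}; both suffice.
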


\begin{proof}
Recalling (\ref{binf}), one gets by Lemma \ref{leme0} and Lemma \ref{lemg} that
\begin{equation}\label{hb}
	\|\bm{h}\|_{\infty}^2\leq\int|\partial_y|\bm{h}|^2|\mathrm{d}y\leq 2\|\sqrt{J}\bm{h}\|_{2}\left\|\frac{\bm{h}_y}{\sqrt{J}}\right\|_{2}\leq C.
\end{equation}
Rewrite $(\ref{mhd})_1$ in terms of $G$ as
\begin{equation*}
J_t=\frac J\lambda\left(G+P+\frac{|\bm{h}|^2}{8\pi}\right)
\end{equation*}
from which one can solve
\begin{equation}\label{EXPJ}
\displaystyle J(y,t)=J_0e^{\frac1\lambda\int_0^t\left(G(y,s)+P(y,s)+\frac{|\bm{h}|^2(y,s)}{8\pi}\right)ds}.
\end{equation}
Then, it follows from Lemma \ref{lemg} and (\ref{hb}) that
$$\sup_{0\leq t\leq T}\|J\|_\infty\leq \|J_0\|_\infty e^{\frac1\lambda\int_0^T\left(\|G\|_\infty+\|P\|_\infty+\frac{\|\bm{h}\|_\infty^2}{8\pi}\right)dt}\leq C, $$
proving the conclusion.
\end{proof}

As a corollary of Lemmas \ref{leme0}--\ref{lemjh} and Corollary \ref{cor3.1}, one has the following corollary.

\begin{corollary}\label{cor2.2}
It holds that
\begin{align*}
  \sup_{0\leq t\leq T}\left(\|(\sqrt{\rho_0}u,\sqrt{\rho_0}\bm w, u_y, \bm w_y, G, \bm F)\|_2+\|\bm h\|_{H^1}+\|P\|_1
  +\left\|\left(J,\frac1J,\bm h,P\right)\right\|_\infty \right)\\
  +\int_0^T\left(\left\|\left(\frac{\bm F_y}{\sqrt{\rho_0}},\frac{G_y}{\sqrt{\rho_0}},\left(\frac{\bm h_y}{J}\right)_y\right)\right\|_2^2
  +\left\|\left(\bm F, G,\frac{\bm h_y}{J}\right)\right\|_\infty^4\right)dt\leq C.
\end{align*}

\end{corollary}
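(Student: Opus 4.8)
The plan is to treat this corollary purely as a consolidation step: every quantity appearing in it has already been controlled in one of Lemmas~\ref{leme0}--\ref{lemjh} or in Corollary~\ref{cor3.1}, so the only genuine work is to convert the $J$- and $\rho_0$-weighted norms produced there into the plain norms demanded in the statement. The conversion rests entirely on the two-sided bound $\underline J\le J\le C$, supplied by Lemma~\ref{lj} and Lemma~\ref{lemjh}, together with $0\le\rho_0\le\bar\rho$. Concretely, for any field $f$ one has $\|f\|_2\le\underline J^{-1/2}\|\sqrt J f\|_2$ (lower bound on $J$) and $\|f\|_2^2=\int J\,|f/\sqrt J|^2\le\|J\|_\infty\,\|f/\sqrt J\|_2^2$ (upper bound on $J$), and I would invoke one or the other of these for each term.

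First I would dispatch the supremum-in-time norms. The basic energy identity (Lemma~\ref{leme0}) already yields $\|\sqrt{\rho_0}u\|_2$ and, via $J\ge\underline J$, the bound on $\|P\|_1$; Lemma~\ref{lemom1} gives $\|\sqrt{\rho_0}\bm w\|_2$. Lemma~\ref{lemg} controls $\|\sqrt JG\|_2$, $\|\bm h_y/\sqrt J\|_2$, and $\|P\|_\infty$, which convert to $\|G\|_2$ (lower bound on $J$) and $\|\bm h_y\|_2$ (upper bound on $J$); Lemma~\ref{lemtr} gives $\|\sqrt J\bm F\|_2$, hence $\|\bm F\|_2$, and Corollary~\ref{cor3.1} gives $\|\bm w_y/\sqrt J\|_2$, hence $\|\bm w_y\|_2$. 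For $\|\bm h\|_2$ and $\|\bm h\|_\infty$ I would use Lemma~\ref{lemh} together with the interpolation $(\ref{binf})$, exactly as in the proof of Lemma~\ref{lemjh}. The upper and lower bounds on $J$ themselves are Lemma~\ref{lemjh} and Lemma~\ref{lj}.

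The one sup-term that is not literally the conclusion of an earlier lemma is $\|u_y\|_2$, and I would handle it by the algebraic substitution from the definition $(\ref{Gdef})$ of $G$, namely $u_y=\frac{J}{\lambda}\bigl(G+P+\tfrac{|\bm h|^2}{8\pi}\bigr)$. Bounding this requires $\|J\|_\infty\le C$, the just-obtained $\|G\|_2\le C$, the estimate $\|P\|_2^2\le\|P\|_\infty\|P\|_1\le C$, and $\||\bm h|^2\|_2^2=\int|\bm h|^4\le\underline J^{-1}\int J|\bm h|^4\le C$ from Lemma~\ref{lemb4}; this is the only point at which the higher integrability of Lemma~\ref{lemb4} enters, and it is the mild obstacle in an otherwise routine argument.

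Finally I would collect the time-integral terms. The integrals of $\|\bm F_y/\sqrt{\rho_0}\|_2^2$, $\|\bm F\|_\infty^4$, $\|G_y/\sqrt{\rho_0}\|_2^2$, $\|G\|_\infty^4$, and $\|\bm h_y/J\|_\infty^4$ are literally the conclusions of Lemma~\ref{lemtr} and of the two parts of Lemma~\ref{lemg}. For $\int_0^T\|(\bm h_y/J)_y\|_2^2\,dt$ I would again use the upper bound on $J$ in the form $\|(\bm h_y/J)_y\|_2^2\le\|J\|_\infty\,\bigl\|\tfrac{1}{\sqrt J}(\bm h_y/J)_y\bigr\|_2^2$, so that the weighted bound of Lemma~\ref{lemg} combined with Lemma~\ref{lemjh} closes the estimate. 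Summing the finitely many resulting bounds, each a constant $C$ of the admissible type, yields the asserted inequality.
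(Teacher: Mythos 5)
Your proposal is correct and matches the paper's (implicit) argument: the paper states this corollary without proof as an immediate consequence of Lemmas \ref{leme0}--\ref{lemjh} and Corollary \ref{cor3.1}, and the routine steps you spell out --- converting $J$- and $\rho_0$-weighted norms via $\underline J\le J\le C$, and recovering $\|u_y\|_2$ from $u_y=\frac{J}{\lambda}\bigl(G+P+\frac{|\bm h|^2}{8\pi}\bigr)$ --- are exactly the intended consolidation. (The only cosmetic difference is that $\||\bm h|^2\|_2$ could also be bounded by $\|\bm h\|_\infty\|\bm h\|_2$ without invoking Lemma \ref{lemb4}, but your route through Lemma \ref{lemb4} is equally valid.)
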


Some higher order a priori estimates for $(J, \bm{h}, P)$ are stated in the next lemma.

\begin{lemma}\label{lemjpy}
It holds that
\begin{equation*}\label{hpy}
 \sup_{0\leq t \leq T}\|(J_y, {P_y})\|_2^2  \leq C.
\end{equation*}
\end{lemma}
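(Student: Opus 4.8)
The plan is to control $P_y$ first and then recover $J_y$ essentially for free. Indeed, differentiating the explicit formula (\ref{EXPJ}) in $y$ and using $J_0\equiv1$ gives
\begin{equation*}
J_y=\frac{J}{\lambda}\int_0^t\Big(G_y+P_y+\frac{\bm h\cdot\bm h_y}{4\pi}\Big)\,\mathrm ds,
\end{equation*}
so that, by Minkowski's inequality together with $\|J\|_\infty\le C$ (Lemma \ref{lemjh}), $\int_0^T\|G_y\|_2^2\,\mathrm dt\le C$, and $\|\bm h\|_\infty+\|\bm h_y\|_2\le C$ (all from Corollary \ref{cor2.2}, using $\rho_0\le\bar\rho$), one obtains $\sup_{0\le t\le T}\|J_y\|_2\le C$ as soon as $\sup_{0\le t\le T}\|P_y\|_2\le C$ has been established. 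Thus the whole lemma reduces to the pressure estimate.

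For $P_y$ I would start from the reformulated pressure equation (\ref{pe}), written schematically as $P_t+\frac1\lambda\Phi^2=R$ with $\Phi:=P+\frac{2-\gamma}2G+\frac{2-\gamma}{16\pi}|\bm h|^2$ and $R$ the right-hand side of (\ref{pe}); its virtue is that every factor occurring in $\Phi$ and $R$ --- namely $P,\,G,\,\bm h,\,\bm F,\,\bm h_y/J$ --- is one of the quantities already controlled in Corollary \ref{cor2.2}. Differentiating in $y$, testing with $P_y$, and integrating gives
\begin{equation*}
\frac12\frac{d}{dt}\|P_y\|_2^2=-\frac2\lambda\int\Phi\,\Phi_y\,P_y\,\mathrm dy+\int R_y\,P_y\,\mathrm dy,
\end{equation*}
where $\Phi_y=P_y+\frac{2-\gamma}2G_y+\frac{2-\gamma}{16\pi}(|\bm h|^2)_y$ and $R_y$ is a sum of products of the flux factors $G+\frac{|\bm h|^2}{8\pi}$, $\bm F-\frac{\bm h}{4\pi}$, $\frac{\bm h_y}{J}$ with their $y$-derivatives $G_y+\frac{\bm h\cdot\bm h_y}{4\pi}$, $\bm F_y-\frac{\bm h_y}{4\pi}$, $\big(\frac{\bm h_y}{J}\big)_y$.

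The delicate point, which I expect to be the crux, is that these two families of factors carry complementary but individually insufficient time integrability: the $L^\infty$ factors $\|G\|_\infty,\|\bm F\|_\infty,\|\bm h_y/J\|_\infty$ lie only in $L^4(0,T)$, while the derivatives $\|G_y\|_2,\|\bm F_y\|_2,\|(\bm h_y/J)_y\|_2$ lie only in $L^2(0,T)$, so a naive split of a triple product $\|G\|_\infty\|G_y\|_2\|P_y\|_2$ would leave $\|G\|_\infty^2\|G_y\|_2^2$, which need not be integrable. The remedy is to pair, via Young's inequality, the \emph{square} of the $L^\infty$ factor with $\|P_y\|_2^2$ and to leave the \emph{square} of the derivative factor as a standalone source, e.g.
\begin{equation*}
\Big|\int\Big(G+\tfrac{|\bm h|^2}{8\pi}\Big)\Big(G_y+\tfrac{\bm h\cdot\bm h_y}{4\pi}\Big)P_y\,\mathrm dy\Big|\le C\big(1+\|G\|_\infty^2\big)\|P_y\|_2^2+C\big(\|G_y\|_2^2+1\big),
\end{equation*}
and analogously for the $\bm F$- and $\bm h_y/J$-contributions; the term $-\frac2\lambda\int\Phi P_y^2\,\mathrm dy$ is bounded by $C(1+\|G\|_\infty)\|P_y\|_2^2$ using $\|\Phi\|_\infty\le C(1+\|G\|_\infty)$, and the mixed terms arising from $\Phi_y$ are handled in the same fashion.

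Collecting all contributions yields a differential inequality $\frac{d}{dt}\|P_y\|_2^2\le a(t)\|P_y\|_2^2+b(t)$ with $a(t)=C(1+\|G\|_\infty^2+\|\bm F\|_\infty^2+\|\bm h_y/J\|_\infty^2)$ and $b(t)=C(1+\|G_y\|_2^2+\|\bm F_y\|_2^2+\|(\bm h_y/J)_y\|_2^2)$, both of which belong to $L^1(0,T)$ by Corollary \ref{cor2.2} (the $L^4$-in-time bounds put $a$ in $L^2\subset L^1$, the $L^2$-in-time bounds put $b$ in $L^1$). Crucially $J_y$ does not enter this inequality, so it closes by itself: Gr\"onwall's inequality together with $\|P_y\|_2^2(0)=\|P_0'\|_2^2\le C$ gives $\sup_{0\le t\le T}\|P_y\|_2^2\le C$, after which the representation of $J_y$ in the first paragraph completes the proof. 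The only genuine care needed is the bookkeeping that guarantees every coefficient $a(t)$ and every source $b(t)$ is indeed integrable in time; I expect this matching of integrabilities, rather than any single computation, to be where the argument must be carried out carefully.
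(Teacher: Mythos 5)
Your proof is correct and follows essentially the same route as the paper: differentiate (\ref{pe}) in $y$, test with $P_y$, use the Young inequality so that the squared $L^\infty$ factors (integrable in time by the $L^4(0,T)$ bounds of Corollary \ref{cor2.2}) multiply $\|P_y\|_2^2$ while the squared derivative factors (in $L^1(0,T)$) stand alone as sources, close by Gr\"onwall, and then recover $J_y$ from the differentiated representation (\ref{EXPJ}). The integrability matching you identify as the crux is precisely what the paper's H\"older--Cauchy step performs implicitly in arriving at its bound $C\|(\bm{F}_y,G_y)\|_2^2+C\big(1+\|(\bm{F},G,\bm{h}_y/J)\|_\infty^2\big)\big(1+\|P_y\|_2^2\big)$.
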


\begin{proof}
Differentiating  equation $(\ref{pe})$ with respect to $y$ yields
\begin{equation*}\label{pye}
\begin{aligned}
&\partial_t P_y+\frac{2}{ \lambda}\left( P+\frac{2-\gamma}{2}G+\frac{2-\gamma}{16\pi}|\bm{h}|^2\right)\left( P_y+\frac{2-\gamma}{2}G_y+\frac{2-\gamma}{8\pi}\bm{h}\cdot\bm{h}_y\right)\\
=&\frac{\gamma^2}{2\lambda}\left(G+\frac{|\bm{h}|^2}{8\pi}\right)\left(G_y+\frac{\bm{h}\cdot\bm{h}_y}{4\pi}\right)+\frac{2(\gamma-1)}{ \mu}\left(\bm{F}-\frac{\bm{h}}{4\pi}\right)\cdot \left(\bm{F}_y-\frac{\bm{h}_y}{4\pi}\right)\\
&+\frac{2\nu(\gamma-1)}{4\pi}\frac{\bm{h}_y}{{J}}\cdot\left(\frac{\bm{h}_y}{{J}}\right)_y.
\end{aligned}
\end{equation*}
Multiplying the above with $P_y$ and integrating over $\mathbb {R}$, it follows from the H\"older and Cauchy inequalities and Corollary \ref{cor2.2} that
\begin{equation*}\label{py}
\begin{aligned}
 &\frac{1}{2}\frac{d}{dt}\|{P_y}\|_{2}^2+\frac{2}{\lambda}\|{\sqrt{ P} P_y}\|_{2}^2\\
\leq&  C\|(\bm{F},G,|\bm{h}|^2,\bm{h},P)\|_\infty\|({\bm{F}_y},G_y,\bm{h}\cdot\bm{h}_y,\bm{h}_y,{P_y})\|_2\|P_y\|_2\\
&+C\left\|\frac{\bm h_y}{J}\right\|_\infty\left\|\left(\frac{\bm{h}_y}{{J}}\right)_y\right\|_{2}\|P_y\|_2\\
\leq&C \|({\bm{F}_y},{{G}_y})\|_{2}^2+C\left(1+\left\|\left(\bm{F},G,\frac{\bm h_y}{J}\right)\right\|_{\infty}^2\right)
(1+\|{P_y}\|_2^2),
\end{aligned}	
\end{equation*}
from which, by Corollary \ref{cor2.2} and the Gr\"onwall inequality, one obtains
\begin{equation}\label{hyef}
\sup_{0\leq t \leq T}\|{P_y}\|_{2}^2\leq C.
\end{equation}
By direct calculations and recalling $J_0\equiv1$, it follows from (\ref{EXPJ}) that
\begin{align*}
   J_y=\frac{J}{\lambda}\int_0^t\left(P_y+G_y+\frac{\bm h\cdot\bm h_y}{4\pi}\right)ds,
\end{align*}
from which, by Corollary \ref{cor2.2}, one gets
\begin{equation}\label{Jt}
    \sup_{0\leq t\leq T}\| J_y \|_2
    \leq C\left(\sup_{0\leq t\leq T}\|J\|_\infty\right)\int_0^T\|(G_y,\bm h\cdot\bm h_y,P_y)\|_2dt.
    \leq C
\end{equation}
Combining (\ref{hyef}) with (\ref{Jt}) leads to the conclusion.
\end{proof}

\section{Proof of Theorem $\ref{thm}$ }

Theorem \ref{thm} is proved as follows.

\begin{proof}[Proof of Theorem $\ref{thm}$]
By Lemma \ref{lem1}, there is a unique local strong solution, denoted by $(J, u, \bm{w}, \bm{h}, P)$, to system (\ref{mhd}) subject to (\ref{ini}). Besides, by iteratively applying Lemma \ref{lem1}, one can extend this solution uniquely
to the maximal time of existence $T_\text{max}$. We claim that $T_\text{max}=\infty$.
Assume by contradiction that $T_\text{max}<\infty$. Then, by the local well-posedness result in Lemma \ref{lem1}, it must have
\begin{equation}
  \label{TM}
  \varlimsup_{T\rightarrow T_\text{max}^-}
  \sup_{0\leq t\leq T}\left[\left(\inf_{y\in\mathbb R} J\right)^{-1}+\|(J_y,\sqrt{\rho_0}u, \sqrt{\rho_0}\bm{w}, u_y, \bm{w}_y)\|_2+\|(\bm{h},P)\|_{H^1}+\|J\|_\infty\right] =\infty.
\end{equation}
However, due to Corollary \ref{cor2.2} and Lemma \ref{lemjpy}, for any $T\in(0,T_{\text{max}})$, one has
$$
  \sup_{0\leq t\leq T}\left(\|(J_y,\sqrt{\rho_0}u,\sqrt{\rho_0}\bm w, u_y, \bm w_y)\|_2+\|(\bm h,P)\|_{H^1}+\left\| \left(J,\frac1J\right)\right\|_\infty \right)
  \leq C,
$$
for a positive constant $C$ depending only on the initial data and $T_\text{max}$ and, thus,
$$
\sup_{0\leq t\leq T_\text{max}}\left(\|(J_y,\sqrt{\rho_0}u,\sqrt{\rho_0}\bm w, u_y, \bm w_y)\|_2+\|(\bm h,P)\|_{H^1}+\left\| \left(J,\frac1J\right)\right\|_\infty \right)<\infty,
$$
contradicting to (\ref{TM}). This contradiction implies $T_\text{max}=\infty$, proving Theorem \ref{thm}.
\end{proof}

\smallskip
{\bf Acknowledgment.}
The work of J.L. was supported in part by the the National Natural Science Foundation of China (Grant No. 12371204) and the Key Project of National Natural Science Foundation of China (Grant No. 12131010).
\bigskip


\begin{thebibliography}{00}
\bibitem{CAOPENGSUN2021}
Cao, Y.; Peng, Y.; Sun, Y.: \emph{Global existence of strong solutions to MHD with density-depending viscosity and temperature-depending heat-conductivity in unbounded domains}, J. Math. Phys., \bf62 \rm(2021), Paper No. 011508, 17 pp.

\bibitem{CHEHOFTRI00}
Chen, G.-Q.; Hoff, D.; Trivisa, K.: \emph{Global solutions of the compressible Navier-Stokes equations with large discontinuous initial data}, Comm. Partial Differential Equations, \bf25 \rm(2000), 2233--2257.

\bibitem{CHENWANG2002}
Chen, G. Q.; Wang, D.: \emph{Global solutions of nonlinear magnetohydrodynamics with large initial data}, J. Differential Equations, \bf182 \rm(2002), 344--376.

\bibitem{CHENWANG2003}
Chen, G. Q.; Wang D.: \emph{Existence and continuous dependence of large solutions for the magnetohydrodynamics equations}, Z. Angew. Math. Phys., \bf54 \rm(2003), 608--632.




\bibitem{FANHU2015}
Fan, J.; Hu, Y.: \emph{Global strong solutions to the 1-D compressible magnetohydrodynamic equations with zero resistivity}, J. Math. Phys., \bf56 \rm(2015), 023101

\bibitem{FANHUANGLI2017}
Fan, J.; Huang, S., Li, F.: \emph{Global strong solutions to the planar compressible magnetohydrodynamic equations with large initial data and vacuum}, Kine. Rela. Mode, \bf4 \rm(2017), 1035--1053.

\bibitem{FANJIANGNAK2007}
Fan, J.; Jiang, S.; Nakamura, G.: \emph{Vanishing shear viscosity limit in the magnetohydrodynamic equations}, Comm. Math. Phys., \bf270 \rm(2007), 691--708.

\bibitem{HUJU2015}
Hu, Y.; Ju, Q.: \emph{Global large solutions of magnetohydrodynamics with temperature-dependent heat conductivity},
Z. Angew. Math. Phys., \bf66 \rm(2015), 865--889.



\bibitem{HUANGSHISUN2019}
Huang, B.; Shi, X.; Sun, Y.: \emph{Global strong solutions to magnetohydrodynamics with density-dependent viscosity and degenerate heat-conductivity}, Nonlinearity, \bf32 \rm(2019), 4395--4412.

\bibitem{HUANGSHISUN2021}
Huang, B.; Shi, X.; Sun, Y.: \emph{Large-time behavior of magnetohydrodynamics with temperature-dependent heat-conductivity}, J. Math. Fluid Mech., \bf23 \rm(2021), Paper No. 67, 23 pp.

\bibitem{JIAZLO04}
Jiang, S.; Zlotnik, A.: \emph{Global well-posedness of the Cauchy problem for the equations of a one-dimensional viscous heat-conducting gas with Lebesgue initial data}, Proc. Roy. Soc. Edinburgh Sect. A, \bf134 \rm(2004), 939--960.

\bibitem{KAZHIKOV82}
Kazhikhov, A.~V.: \emph{Cauchy problem for viscous gas equations}, Siberian Math. J., \bf23 \rm(1982), 44--49.

\bibitem{KAZ1987}
Kazhikhov, A. V.: \emph{A priori estimates for the solutions of equations of magnetic-gas-dynamics}, boundary value problems for equations of mathematical physics, Krasnoyarsk (1987) (in Russian)

\bibitem{KAZHIKOV77}
Kazhikhov, A. V.; Shelukhin, V. V.: \emph{Unique global solution with respect to time of initial boundary value problems for one-dimensional equations of a viscous gas}, J. Appl. Math. Mech., \bf41 \rm(1977), 273--282.

\bibitem{KAZSMA1986}
Kazhikhov, A.V. ; Smagulov, S.S.: \emph{Well-posedness and approximation methods for a model of magnetogasdynamics}, Izv. Akad. Nauk Kaz. SSR Ser. Fiz.-Mat., \bf5 \rm(1986), 17--19.

\bibitem{JL1DHEAT}
Li, J.: \emph{Global well-posedness of the one-dimensional compressible Navier-Stokes equations with constant heat conductivity and nonnegative density}, SIAM J. Math. Anal., \bf51 \rm(2019), 3666--3693.

\bibitem{LJK1DNONHEAT}
Li, J.: \emph{Global well-posedness of non-heat conductive compressible Navier-Stokes equations in 1D}, Nonlinearity {\bf33} (2020), 2181--2210.

\bibitem{LIJLIM2022}
Li, J.; Li, M.: \emph{Global strong solutions to the Cauchy problem of the planar non-resistive magnetohydrodynamic equations with large initial data}, J. Differential Equations, {\bf316} (2022), 136--157.
\bibitem{LILIANG16}

Li, J.; Liang, Z.: \emph{Some uniform estimates and large-time behavior
of solutions to one-dimensional compressible Navier-Stokes system in
unbounded domains with large data}, Arch. Rational Mech. Anal., \bf220
\rm(2016), 1195--1208.





\bibitem{LIXINADV}
Li, J.; Xin, Z.: \emph{Entropy bounded solutions to the one-dimensional compressible Navier-Stokes equations with zero heat conduction and far field vacuum}, Adv. Math., {\bf 361} \rm(2020), 106923, 50 pp.
	
\bibitem{LIXINCPAM}
Li, J.; Xin, Z.: \emph{Entropy-bounded solutions to the one-dimensional heat conductive compressible Navier-Stokes equations with far field vacuum}, Comm. Pure Appl. Math., {\bf 75} (2022), 2393--2445.

\bibitem{LIXINSIMA2024}
Li, J.; Xin, Z.: \emph{Instantaneous unboundedness of the entropy and uniform positivity of the temperature for the compressible Navier-Stokes equations with fast decay density}, SIAM J. Math. Anal., {\bf56} (2024), 3004--3041.


\bibitem{LIY2018}
Li, Y.: \emph{Global strong solutions to the one-dimensional heat-conductive model for planar non-resistive magnetohydrodynamics with large data}, Z. Angew. Math. Phys., \bf69 \rm(2018), Paper No. 78, 21 pp.

\bibitem{LVSHIXIONG2021}
Lv, B.; Shi, X., Xiong, C.: \emph{Global existence of strong solutions to the planar compressible magnetohydrodynamic equations with large initial data in unbounded domains}, Commun. Math. Sci., \bf19 \rm(2021), 1655--1671.

\bibitem{PANZHANG2015}
Pan, R.; Zhang, W.: \emph{Compressible Navier-Stokes equations with temperature
dependent heat conductivities}, Commun. Math. Sci., \bf13 \rm(2015), 401--425.

\bibitem{SHANGYANG2024}
Shang, Z.; Yang, E.: \emph{Initial boundary value problem and exponential stability for the planar magnetohydrodynamics equations with temperature-dependent viscosity}, Adv. Nonlinear Anal., \bf13 \rm(2024), Paper No. 20240013, 23 pp.

\bibitem{SIZHAO2019}
Si, X.; Zhao, X.: \emph{Large time behavior of strong solutions to the 1D non-resistive full compressible MHD system with large initial data},
Z. Angew. Math. Phys., \bf70 \rm(2019), Paper No. 21, 24 pp.

\bibitem{SMADURISK1993}
Smagulov, S. S.; Durmagambetov, A. A.; Iskenderova, D. A.: \emph{The Cauchy problem for the equations of magnetogasdynamics}, Differ. Equations, \bf29 \rm(1993), 278--288.

\bibitem{SONGZHAO2025}
Song, D.; Zhao, X.: \emph{Large time behavior of strong solution to the magnetohydrodynamics system with temperature-dependent viscosity, heat-conductivity, and resistivity}, Electron. Res. Arch., \bf33 \rm(2025), 938--972.

\bibitem{SUZHAO2018}
Su, S.; Zhao, X.: \emph{Global wellposedness of magnetohydrodynamics system with temperature-dependent viscosity}, Acta Mathematica Scientia, \bf38B  \rm(2018), 898--914.

\bibitem{SUNZHANG2024}
Sun, Y.; Zhang, J.: \emph{Global existence and exponential stability of planar magnetohydrodynamics with temperature-dependent transport coefficients}, Commun. Math. Sci., \bf22 \rm(2024), 1251--1285.

\bibitem{WANG2003}
Wang, D.: \emph{Large solutions to the initial-boundary value problem for planar magnetohydrodynamics}, SIAM J. Appl. Math., \bf4 \rm(2003), 1424--1441.

\bibitem{WENZHU13}
Wen, H.; Zhu, C.: \emph{Global classical large solutions to Navier-Stokes equations for viscous compressible and heat-conducting fluids with vacuum}, SIAM J. Math. Anal., \bf45 \rm(2013), 431--468.

\bibitem{ZHANGZHAO2017}
Zhang, J.; Zhao, X.: \emph{On the global solvability and the non-resistive limit of the one-dimensional compressible heat-conductive MHD equations}, J. Math. Phys., \bf58 \rm(2017), 031504

\bibitem{ZHANG2025}
Zhang, M.: \emph{Asymptotic stability of the magnetohydrodynamic flows with temperature-dependent transport coefficients}, Axioms 2025, 14, 100.

\bibitem{ZLOAMO97}
Zlotnik, A. A.; Amosov, A. A.: \emph{On stability of generalized solutions to the equations of one-dimensional motion of a viscous heat-conducting gas}, Siberian Math. J., \bf38 \rm(1997), 663--684.

\bibitem{ZLOAMO98}
Zlotnik, A. A.; Amosov, A. A.: \emph{Stability of generalized solutions to equations of one-dimensional
motion of viscous heat conducting gases}, Math. Notes, \bf63 \rm(1998), 736--746.

%
%
%
%
%
%
%
%
%
%
%
%
%
%
%
%
%
%
%
%
%
%
%
%
%
%
%
%
%
%
%
%
%
%
%
%
%
%
%
%
%
%
%
%
%
%
%
%
%
%
%
%
%
%
%
%
%
%


\end{thebibliography}
\end{document}